\documentclass[12pt,a4paper]{article}
\pdfoutput=1
\usepackage[T1]{fontenc}
\usepackage[utf8]{inputenc}
\usepackage[english]{babel}
\usepackage{amsmath}
\usepackage{cite}
\usepackage{amsfonts}
\usepackage{amssymb}
\usepackage{amsthm}
\usepackage{mathtools}
\mathtoolsset{showonlyrefs=true}
\usepackage[colorlinks=true,allcolors=blue]{hyperref}
\usepackage[left=2.5cm,right=2.5cm,top=2.5cm,bottom=2.5cm]{geometry}
\frenchspacing
\usepackage{float}
\usepackage{authblk}
\usepackage{enumerate}
\usepackage{physics}
\usepackage{subcaption}

\newtheorem{thm}{Theorem}[section]
\newtheorem{definition}[thm]{Definition}
\newtheorem{lemma}[thm]{Lemma}

\newtheorem{rem}[thm]{Remark}

\DeclareMathOperator*{\essup}{ess\,sup}
\pdfminorversion=4 

\usepackage{tikz}
\tikzstyle{complex}=[shape=rectangle, draw=black]
\tikzstyle{compartment}=[shape=circle, draw=black, fill=black]

\hyphenation{op-tical net-works semi-conduc-tor}

\let\epsilon\varepsilon

\author[1]{Mihály A. Vághy}
\author[1,2]{G\'abor Szederk\'enyi} 
\affil[1]{\small P\'azm\'any P\'eter Catholic University, Faculty of Information Technology and Bionics, Práter u. 50/a,  H-1083 Budapest, Hungary} 
\affil[2]{\small Systems and Control Laboratory, Institute for Computer Science and Control (SZTAKI), Kende u. 13-17, H-1111 Budapest, Hungary}
\title{Persistence and stability of generalized ribosome flow models with time-varying transition rates}
\date{}

\begin{document}
\maketitle
\begin{abstract}
	In this paper the qualitative dynamical properties of so-called generalized ribosome flow models are studied. Ribosome flow models known from the literature are generalized by allowing an arbitrary directed network structure between the compartments and secondly, by assuming a general time-varying rate function describing the compartmental transitions. Persistence of the dynamics is shown using the chemical reaction network (CRN) representation of the system. We show the stability of different compartmental structures including strongly connected ones with an entropy-like logarithmic Lyapunov function. The L1 contractivity of solutions is also studied in the case of periodic reaction rates having the same period. It is also shown that different Lyapunov functions may be assigned to the same model depending on the factorization of the reaction rates.
\end{abstract}
\section{Introduction}
Compartmental models are used to describe and analyze the transport between different containers, called compartments in various natural and technological systems \cite{Haddad2010,Godfrey1983}. Compartments can be assigned to tissues or organs in pharmacokinetic models, mass containers in process systems, distinct disease states in epidemiological models, road sections in transportation systems or different habitats in ecological models. The modeled objects (molecules, people, vehicles, etc.) can move between compartments obeying the given constraints such as limits of directions, flow rates, or capacities. A fundamental feature of compartmental models is that each modeled object can be present in exactly one compartment at a given time. Naturally, compartmental models written in the original physical coordinates belong to the class of nonnegative systems for which the nonnegative orthant is invariant with respect to the dynamics \cite{Farina2000,Rantzer2018}. This special property supports the dynamical analysis and control design in several ways. The controllability, observability, realizability and identifiability of mainly linear compartmental system are addressed in \cite{Brown1980}. An excellent overview of the qualitative dynamical properties of general compartmental systems can be found in \cite{Jacquez1993}. 

The dynamical modeling of the mRNA translation process has been in the focus of research since the second half of the 20th century (see, e.g. \cite{MacDonald1968,Heinrich1980,vonderHaar2012}). The first large scale analysis of gene translation through the so-called ribosome flow model (RFM) was presented in \cite{Reuveni2011}, where the applied second order nonnegative and nonlinear model based on the principle of Totally Asymmetric Exclusion \cite{Shaw2003} was able to capture the most important dynamical features of the translation process. In \cite{Margaliot2013} the RFM was equipped with an appropriate input-output pair, and it was shown that after applying an affine positive output feedback, the system had a unique equilibrium point which is globally stable in the bounded operating domain. A circular RFM structure was analyzed in \cite{Raveh2015}, where the authors proved using the theory of cooperative systems that the system has a continuum of equilibria, but each equilibrium is globally asymptotically stable within the equivalence classes of trajectories determined by the initial conditions. The stability of periodic solutions was also shown. In \cite{Raveh2016} a bounded pool of free ribosomes was added to the RFM generating a competition among the arbitrary number of mRNA molecules for ribosomes. This generates a special network structure for RFM subsystems, for which the uniqueness and stability of equilibria together with the properties of periodic solutions were also shown. Different compartment sizes of the RFM were assumed in \cite{Bar-Shalom2020}, and it was shown that this modification does not change the favorable dynamical properties of the system. In \cite{Jain2022}, the ribosome flow model with Langmuir kinetics (RFMLK) is introduced, and a network structure is constructed with RFMLK subsystems connected through a pool. Among other results, it is shown that the trajectories of such a network always converge to a unique equilibrium. 

Chemical reaction networks (CRNs) also called kinetic systems can be considered as universal descriptors of nonlinear dynamics, especially that of nonnegative systems \cite{Erdi1989}. Since the 1970's the theory of CRNs has been intensively studied, and there are several fundamental results on the relation between network structure/parametrization and dynamical properties \cite{Feinberg2019}. The stability of mass-action type CRNs is most often analyzed using an entropy-like logarithmic Lyapunov function, originally called a ``pseudo-Helmholtz function" in \cite{Horn1972}. Probably the most well-known conjecture of chemical reaction network theory is the ``Global attractor conjecture" according to which complex balanced kinetic systems are globally stable with respect to the nonnegative orthant with the logarithmic Lyapunov function \cite{Craciun2015}. This conjecture was proved for complex balanced reaction networks with a reaction graph of one component \cite{Anderson2011}. One of the most important results from the point of view of this paper is \cite{Chaves2005} studying zero deficiency networks, where the allowed kinetics is more general than mass action, the rate coefficients can be time-varying, and the logarithmic Lyapunov function is also generalized. The Lyapunov-function-based stability analysis of RFMs is mentioned as an important open problem in \cite{Margaliot2012}, which will be addressed in this paper using the CRN representation of the system.

It is interesting to mention that mathematical models which are equivalent to RFMs can also be obtained through a special finite volume spatial discretization of widely used flow models in PDE form \cite{Liptak2021}. These models also have a transparent representation in CRN form supporting further dynamical analysis. An arbitrary directed graph structure of such models with general time-invariant kinetics was considered in \cite{szederkenyi2022persistence}, where the existence and uniqueness of equlibria, persistence and contractivity (non-expansive property) of the solutions was shown using the theory of Petri nets, compartmental systems, and earlier results on RFMs. The stability of this model class with logarithmic Lyapunov functions was shown in \cite{Vaghy2022b}, while a port-Hamiltonian description was given in \cite{Vaghy2022}.

Based on the above overview, the aim of this paper is to extend the results of \cite{szederkenyi2022persistence} and \cite{Vaghy2022b} in the following respects: considering even more general kinetics with explicit time-dependence, the qualitative analysis of periodic solutions, and finally, stability analysis with a family of different logarithmic Lyapunov functions. 

The structure of the paper is the following. Section \ref{sec:not} contains the applied mathematical notations for compartmental models and kinetic systems. In Section \ref{sec:kin} the kinetic representation of the studied model class is described, while new results on persistence and periodic behaviour in the time-varying case are proposed in Section \ref{sec:qua}. Stability analysis results with a family of non-unique logarithmic Lyapunov functions are described in Section \ref{sec:sta}, and finally, Section \ref{sec:con} summarizes the main results of the paper.

\section{Notations and background}\label{sec:not}
In this section, we describe the basic notations and building blocks of a compartmental system class and chemical reaction networks (CRNs). The notations and overview in this section are based on \cite{szederkenyi2022persistence} and \cite{Vaghy2022}. 

\subsection{Compartmental models}
Throughout the paper we consider systems containing a set of interconnected compartments and objects (such as ribosomes, particles, molecules, vehicles etc.) moving between them. We assume that the rate of transfer between compartments depends on the amount of objects in the source compartment as well as on the amount of free space in the target compartment. This naturally implies that each compartment has a well-defined finite \textit{capacity} that limits the amount of modeled quantities that can be contained in the given compartment. We also allow explicit time dependence and in some cases dependence on the amount of objects and free space in other compartments.

For the formal definition, let us consider the set $Q=\qty{q_1,q_2,\dots,q_m}$ of compartments and the set $A\subset Q\times Q$ of transitions, where $(q_i,q_j)\in A$ represents the transition from compartment $q_i$ into $q_j$. Then, the directed graph $D=(Q,A)$ is called the \textit{compartmental graph} and it describes the structure of the compartmental model. The transitions are assumed to be immediate, thus loop edges are not allowed in the model since they do not introduce additional dynamical terms. Similarly, we do not allow parallel edges between two compartments in the same direction since they can be replaced by a single transition. We say that a (compartmental) graph is \textit{strongly connected} if there exists a directed path between any two vertices in both directions, and we say that a graph is \textit{weakly reversible} if it is a collection of isolated strongly connected subgraphs.

For each compartment $q_i$ we introduce the sets of \textit{donors} and \textit{receptors}, respectively, as
\begin{equation}
	\begin{aligned}
		\mathcal{D}_i&=\qty\big{j\in\qty{1,2,\dots,m}\big|(q_j,q_i)\in A},\\
		\mathcal{R}_i&=\qty\big{j\in\qty{1,2,\dots,m}\big|(q_i,q_j)\in A};
	\end{aligned}
\end{equation}
that is, the set of donors of a given compartment are the compartments where an incoming transition originates from and the set of receptors are the compartments where an outgoing transition terminates in.

\subsection{Chemical reaction networks (kinetic systems)}\label{subsec:CRNs}
In this subsection we give a brief introduction of kinetic systems based on \cite{Feinberg2019,Horn1972}, where more details can be found. A chemical reaction network (CRN) contains a set of \textit{species} $\Sigma=\qty{X_1,X_2,\dots,X_N}$ and the corresponding species vector is given by $X=[X_1~X_2~\dots~X_N]^{\mathrm{T}}$. The species of a CRN are transformed into each other through \textit{elementary reaction steps} of the form
\begin{equation}
	C_j\xrightarrow{\mathcal{K}_j(t)}C_{j'}\qquad j=1,2,\dots,R,
\end{equation}
where $C_j=y_j^{\mathrm{T}}X$ and $C_{j'}=y_{j'}^{\mathrm{T}}X$ are the source and product \textit{complexes}, respectively, the vectors $y_j,y_{j'}\in\mathbb{N}_0^N$ are \textit{stoichiometric coefficient vectors} and functions $\mathcal{K}_j:\overline{\mathbb{R}}_+^N\times\overline{\mathbb{R}}_+\mapsto\overline{\mathbb{R}}_+$ are the \textit{rate functions} with $\overline{\mathbb{R}}_+$ denoting the set of nonnegative real numbers. The matrix $Y$ containing the stoichiometric coefficient vectors as columns is called the \textit{stoichiometric matrix}. The subspace $\mathcal{S}\subset\mathbb{R}^N$ spanned by the so-called \textit{reaction vectors} $y_{j'}-y_j$ is called the \textit{stoichiometric subspace} of the CRN.

The CRN structure can be uniquely described by a directed graph as follows. For each complex we assign a vertex in the graph and and for each elementary reaction step of the form $C_j\rightarrow C_{j'}$ we assign a directed edge between the corresponding vertices. We call the resulting graph the \textit{reaction graph} of the CRN. The \textit{deficiency} of the CRN is defined as $\delta=m-\ell-s$, where $m$ is the number of distinct complexes, $\ell$ is the number of linkage classes (graph components) in the reaction graph and $s$ is the dimension of the stoichiometric subspace.

Let $x(t)\in\overline{\mathbb{R}}_+^N$ denote the state vector of the species as a function of time for $t\ge0$. Based on the above, the dynamics of the CRN is given by
\begin{equation}\label{eq:kin}
	\dot x(t)=\sum_{j=1}^R\mathcal{K}_j(x,t)[y_{j'}-y_j].
\end{equation}

We assume that a reaction can only take place if each species of the given reaction have nonzero concentration; that is, we assume that $\mathcal{K}_j\qty\big(x(t),t)=0$ whenever there exists $k\in\mathrm{supp}(y_j)$ such that $x_k(t)=0$, where we say that $k\in\mathrm{supp}(y_j)$ if $[y_j]_k>0$. This property ensures the invariance of the nonnegative orthant (or a part of it). We also presume standard regularity assumptions of the rate functions that guarantee local existence and uniqueness of solutions. Different results in this paper require different sets of such assumptions, thus for the sake of generality they will be specified later. Dynamics of the form of \eqref{eq:kin} is called \textit{persistent} if no trajectory that starts in the positive orthant has an omega-limit point on the boundary of $\mathbb{R}_+^N$.

We note that for any $v\in\mathcal{S}^{\perp}$ (where $\mathcal{S}$ denotes the stoichiometric subspace) we have that
\begin{equation}
	\langle \dot x,v\rangle=\sum_{j=1}^R\mathcal{K}_j(x,t)\langle y_{j'}-y_j,v\rangle=0
\end{equation}
and thus $\langle x,v\rangle$ is constant. Since $v\in\mathcal{S}^{\perp}$ was arbitrary we have that $x(t)\in x(0)+\mathcal{S}$. This shows that the translates of $\mathcal{S}$ define invariant linear manifolds for the system. We further define for each $p\in\mathbb{R}_+^n$ a positive stoichiometric compatibility class $\mathcal{S}_p=(p+\mathcal{S})\cap\overline{\mathbb{R}}_+^n$.

A set of ODEs of the form $\dot x=f(x,t)$ is called kinetic if it can be written in the form \eqref{eq:kin} with appropriate rate functions and stoichiometric coefficient vectors. 

\section{Kinetic representation}\label{sec:kin}
In this section we construct a kinetic representation of the above compartmental system class. To do so, we assign a CRN that incorporates the compartmental structure. This allows the introduction of a system of ODEs of the form \eqref{eq:kin} describing the time evolution of the compartmental model. Some of the following steps are described in \cite{szederkenyi2022persistence} or \cite{Vaghy2022} in a time-invariant setting but here we recall and extend them for convenience.

\subsection{Kinetic modelling of compartmental transitions}
Let us consider a compartmental model $D=(Q,A)$. Let the set of species be $\Sigma=\qty{N_1,N_2,\dots,N_m}\cup\qty{S_1,S_2,\dots,S_m}$ where $N_i$ and $S_i$ represent the number of particles and available spaces in compartment $q_i$, respectively. To each transition $(q_i,q_j)\in A$ we assign a reaction of the form
\begin{equation}
	N_i+S_j\xrightarrow{\mathcal{K}_{ij}}N_j+S_i,
\end{equation}
where $\mathcal{K}_{ij}$ is the rate function of the transition. Such a reaction represents that during the transition from compartment $q_i$ to compartment $q_j$ the number of items decreases in $q_i$ and increases in $q_j$, while the number of available spaces increases in $q_i$ and decreases in $q_j$. Let $n_i$ and $s_i$ denote the continuous amount of particles and free space in $q_i$, respectively.

Based on \eqref{eq:kin} the dynamics of the system is given by
\begin{equation}
	\begin{aligned}
		\dot n_i&=\sum_{j\in\mathcal{D}_i}\mathcal{K}_{ji}(n,s,t)-\sum_{j\in\mathcal{R}_i}\mathcal{K}_{ij}(n,s,t),\label{eq:ns}\\
		\dot s_i&=-\sum_{j\in\mathcal{D}_i}\mathcal{K}_{ji}(n,s,t)+\sum_{j\in\mathcal{R}_i}\mathcal{K}_{ij}(n,s,t)
	\end{aligned}
\end{equation}
where $n$ and $s$ denote the vectorized form of the variables $n_i$ and $s_i$, respectively. It is easy to check that the model class in Eq. \eqref{eq:ns} contains ribosome flow models described in \cite{Margaliot2012} or \cite{Bar-Shalom2020}, and extends them in two ways: firstly, the reaction rate function $\mathcal{K}$ is not necessarily mass-action type and moreover, is time-varying, and secondly, the compartmental graph of the system can be arbitrary (i.e., there can be particle transition between any two compartments). Note, that we also allow the transition rates to depend on the amount of objects and free space in other compartments as well, perhaps describing inhibitory phenomena. Therefore, we call \eqref{eq:ns} a \textit{generalized time-varying ribosome flow model}.
\bigskip

Clearly the reaction graph of the assigned CRN of a compartmental model is generally not strongly connected nor weakly reversible even if the compartmental graph is strongly connected. In fact, the reaction graph is weakly reversible if and only if each transition in the compartmental system is reversible. Even though the reaction graph, in some sense, loses the regularities of the compartmental graph, we can explicitly determine its deficiency from the compartmental topology and, as described in \cite{szederkenyi2022persistence}, CRNs of the form \eqref{eq:ns} exhibit persistence and stability properties in various senses in the time-invariant case.

\subsection{Deficiency of CRNs realizing compartmental models}
For a compartmental system $D=(Q,A)$ let $|D|=\qty\big(Q,\tilde A)$ denote the undirected graph where the parallel edges are merged. 

\begin{thm}\label{thm:deficiency}
	The deficiency of a CRN assigned to a compartmental model $D=(Q,A)$ is equal to the number of chordless cycles in the undirected graph $|D|=\qty\big(Q,\tilde A)$.
\end{thm}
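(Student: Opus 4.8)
\noindent The plan is to read off the three ingredients of the deficiency $\delta=m_c-\ell-s$ (writing $m_c$ for the number of complexes, to avoid clashing with the number $m$ of compartments, $\ell$ for the number of linkage classes and $s$ for $\dim\mathcal S$) directly from the undirected graph $|D|$, to combine them into the cyclomatic number of $|D|$, and finally to identify that number with the count of chordless cycles. Throughout, recall that the assigned CRN has species $N_1,\dots,N_m,S_1,\dots,S_m$ and exactly one reaction $N_i+S_j\to N_j+S_i$ for every directed edge $(q_i,q_j)\in A$.

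\emph{Complexes and linkage classes.} A formal sum $N_i+S_j$ is a complex precisely when it is the source of the reaction of some $(q_i,q_j)\in A$ or the product of the reaction of some $(q_j,q_i)\in A$, i.e.\ precisely when $\{q_i,q_j\}$ is an edge of $|D|$. Since loops are forbidden, $N_i+S_j\ne N_j+S_i$, and comparing the supports of stoichiometric vectors shows that complexes attached to different unordered pairs are distinct; hence $m_c=2|\tilde A|$. Every reaction joins $N_i+S_j$ to $N_j+S_i$, and neither complex of such a pair is incident to any reaction attached to another edge of $|D|$, so the linkage classes of the reaction graph are exactly the $|\tilde A|$ two-element sets $\{N_i+S_j,\,N_j+S_i\}$, one per edge of $|D|$ — a reversible transition merely adds the reverse reaction inside such a pair, without merging linkage classes. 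Thus $\ell=|\tilde A|$.

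\emph{Stoichiometric subspace and the count.} The reaction vector of $(q_i,q_j)$ is $(e_{N_j}+e_{S_i})-(e_{N_i}+e_{S_j})=f_j-f_i$, where $f_k:=e_{N_k}-e_{S_k}$ and $e_{N_k},e_{S_k}$ are the standard basis vectors of $\mathbb R^{2m}$ indexed by $N_k$ and $S_k$. Since $f_1,\dots,f_m$ are linearly independent, the linear map $\mathbb R^m\to\mathbb R^{2m}$ sending $e_k$ to $f_k$ is injective and carries $\mathrm{span}\{e_j-e_i:\{q_i,q_j\}\in\tilde A\}$ onto $\mathcal S$; the former space is the column space of an oriented incidence matrix of $|D|$ and has dimension $m-c$, where $c$ is the number of connected components of $|D|$, so $s=m-c$. (Equivalently, a maximal independent family of first integrals is $\{n_i+s_i\}_{i=1}^m$ together with the total population on each connected component.) Combining, $\delta=2|\tilde A|-|\tilde A|-(m-c)=|\tilde A|-m+c$, the cyclomatic number of $|D|$.

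\emph{From the cyclomatic number to chordless cycles, and the main obstacle.} To conclude I would use the elementary fact that the chordless cycles of a graph generate its cycle space: a chord of a cycle splits it into two strictly shorter cycles whose symmetric difference is the original cycle, so induction on length gives spanning, and therefore one can select from the chordless cycles a basis of the cycle space, of the asserted size $|\tilde A|-m+c$. The genuinely delicate part of the argument is the bookkeeping in the second paragraph — making precise that the passage from $D$ to $|D|$ (merging parallel and antiparallel edges) is exactly what the complex count and the linkage-class count detect, including the degenerate cases of reversible transitions and of a disconnected compartmental graph. Once the triple $(m_c,\ell,s)=(2|\tilde A|,\,|\tilde A|,\,m-c)$ is secured, the remainder is the routine arithmetic and graph theory above.
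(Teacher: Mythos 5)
Your computation of the complex count ($m_c=2|\tilde A|$) and of the linkage classes ($\ell=|\tilde A|$) matches the paper, and your determination of $s$ is in fact cleaner than the paper's: writing each reaction vector as $f_j-f_i$ with $f_k=e_{N_k}-e_{S_k}$ linearly independent reduces $\mathcal S$ to the column space of an oriented incidence matrix of $|D|$ and gives $s=m-c$ in one stroke, whereas the paper reaches the dimension drop through a case-by-case analysis of linear dependencies along cycles. Up to that point everything is correct, and you obtain $\delta=|\tilde A|-m+c$, the cyclomatic number of $|D|$.

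The gap is the final identification. The fact that chordless cycles generate the cycle space only lets you extract a basis from them, i.e.\ it shows that the number of chordless cycles is \emph{at least} $|\tilde A|-m+c$; it does not give equality, and equality fails for general graphs. Take a compartmental model whose underlying undirected graph $|D|$ is $K_4$: by your (correct) computation the deficiency is $12-6-3=3$, the cyclomatic number, but the chordless cycles of $K_4$ are its four triangles, so their number is $4$. Hence this last step cannot be repaired as stated: what your argument actually establishes is that the deficiency equals the cyclomatic number (the number of \emph{independent} cycles) of $|D|$, and the identification with the count of chordless cycles is valid only for graphs in which the chordless cycles happen to be linearly independent in the cycle space. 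Note that the paper's own proof is vulnerable at exactly the same point --- the claim that the stoichiometric dimension drops by precisely the number of chordless cycles --- so the discrepancy you should flag lies in the theorem's formulation rather than in your linear algebra, and your incidence-matrix route is the one that yields the statement which holds in general.
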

\begin{proof}
	For each transition between $q_i$ and $q_j$ we assign two complexes, namely $N_i+S_j$ and $S_i+N_j$, regardless of the transitions' direction, so reversible reactions do not introduce additional complexes, and thus the number of stoichiometrically distinct complexes is $M=2|\tilde A|$. A complex of the form $N_i+S_j$ is only connected with the complex $S_i+N_j$, and thus we have $\ell=|\tilde A|$ linkage classes each consisting of exactly two complexes. To find the dimension of the stoichiometric subspace, denoted by $S=\dim\mathcal{S}$, observe that the reaction vector of a reaction of the form $N_i+S_j\rightarrow N_j+S_i$ is
	\begin{equation}\label{eq:y_ij}
		y_{i\rightarrow j}=-e_i+e_j+e_{m+i}-e_{m_j},
	\end{equation}
	where $e_k\in\mathbb{R}^{2m}$ denotes the $k$th unit vector. Again, since $y_{i\rightarrow j}=-y_{j\rightarrow i}$ it suffices to consider the undirected graph $|D|$. Assume that $y_{i\rightarrow j}$ is such that
	\begin{equation}
		y_{i\rightarrow j}=\sum c_{l\rightarrow l'}y_{l\rightarrow l'}.
	\end{equation}
	Then by \eqref{eq:y_ij} we have that for each non-zero term of the form $c_{.\rightarrow l'}y_{.\rightarrow l'}$ the right-hand side also contains at least one non-zero term $c_{l'\rightarrow .}y_{l'\rightarrow.}$, including the terms $c_{i\rightarrow.}y_{i\rightarrow.}$ and $c_{.\rightarrow j}y_{.\rightarrow j}$. This shows that the edges corresponding to the reaction vectors of the right-hand side form possibly multiple cycles in $|D|$. Without the loss of generality we may assume that this subgraph does not contain cycles isolated from $(q_i,q_j)$. We have to consider the following cases:
	\begin{enumerate}
		\item First, we assume that the right-hand side is a single chordless cycle and contains the transitions
			\begin{equation}
				q_i\rightarrow q_{l_1}\rightarrow q_{l_2}\rightarrow\dots\rightarrow q_{l_r}\rightarrow q_j\rightarrow q_i.
			\end{equation}
			Taking the inner product of unit vectors $e_i,e_{l_1},e_{l_2},\dots,e_{l_r},e_j$ and
			\begin{equation}
				y_{i\rightarrow j}=c_{i\rightarrow l_1}y_{i\rightarrow l_1}+\sum_{k=1}^{r-1}c_{l_k\rightarrow l_{k+1}}y_{l_k\rightarrow l_{k+1}}+c_{l_r\rightarrow j}y_{l_r\rightarrow j}
			\end{equation}
			yields the system of linear equations:
			\begin{equation}
				\begin{aligned}
					-1&=-c_{i\rightarrow l_1}\\
					0&=c_{i\rightarrow l_1}-c_{l_1\rightarrow l_2}\\
					0&=c_{l_1\rightarrow l_2}-c_{l_2\rightarrow l_3}\\
					&\vdots\\
					0&=c_{l_{r-1}\rightarrow l_r}-c_{l_r\rightarrow j}\\
					1&=c_{l_r\rightarrow j}
				\end{aligned}
			\end{equation}
			which clearly has one solution where each weight is equal to one.
		\item If the right-hand side consists of multiple cycles, then repeatedly using the previous argument we can replace the arcs not containing $(q_i,q_j)$ with chords. Note, that if the reaction vector corresponding to the chord is already on the right-hand side, then we just have to modify its coefficient. This method decomposes the right-hand side and will leave us with one chordless cycle containing $(q_i,q_j)$, leading back to the previous case with exactly one solution. Repeating the arc substitutions we can see that each arc becomes a chordless cycle with the reintroduced edges and the arising systems of linear equations have exactly one solution.
	\end{enumerate}
	The first case above shows that the dimension of the stiochiometric subspace reduces by one for each set of reaction vectors that correspond to edges forming a chordless cycle in $|D|$ and the second case shows that is reduced by that exact amount. If $\sigma$ denotes the number of chordless cycles in $\tilde Q$, then the deficiency of the reaction network can be computed as $\delta=M-\ell-S=2|\tilde A|-|\tilde A|-\qty\big(|\tilde A|-\sigma)=\sigma$.
\end{proof}

\subsection{Linear conservation laws}\label{sec:conservation}
System \eqref{eq:ns} exhibits conservation in several senses. First of all, we have that
\begin{equation}
	\sum_{i=1}^m\qty\big(\dot n_i+\dot s_i)=0,
\end{equation}
thus the sum of modeled quantities and free spaces in the system is constant along the trajectories of \eqref{eq:ns}; that is, the function $H:\mathbb{R}^{2m}\mapsto\mathbb{R}$ defined for $x\in\mathbb{R}^{2m}$ as
\begin{equation}\label{eq:H}
	H(x)=\sum_{i=1}^{2m}x_i,
\end{equation}
is a first integral, where $x_1,x_2,\dots,x_m$ and $x_{m+1},x_{m+2},\dots,x_{2m}$ correspond to the variables $n_1,n_2,\dots,n_m$ and $s_1,s_2,\dots,s_m$, respectively. Our next observation is that $\dot n_i+\dot s_i=0$ holds for each compartment, thus $c_i:=n_i+s_i$ is the constant capacity of compartment $q_i$. Let $c^{(m)}$ be a vector such that its $i$th coordinate is $c_i$. Substituting $s=c^{(m)}-n$ we can rewrite \eqref{eq:ns} in a reduced state space as
\begin{equation}
	\begin{aligned}
		\dot n_i&=\sum_{j\in\mathcal{D}_i}\mathcal{K}_{ji}\qty\big(n,c^{(m)}-n,t)-\sum_{j\in\mathcal{R}_i}\mathcal{K}_{ij}\qty\big(n,c^{(m)}-n,t)\label{eq:n}
	\end{aligned}
\end{equation}
or after an analogous substitution, as 
\begin{equation}
	\begin{aligned}
		\dot s_i&=-\sum_{j\in\mathcal{D}_i}\mathcal{K}_{ji}\qty\big(c^{(m)}-s,s,t)+\sum_{j\in\mathcal{R}_i}\mathcal{K}_{ij}\qty\big(c^{(m)}-s,s,t).\label{eq:s}
	\end{aligned}
\end{equation}
As a consequence of the preceding observations, the function $\tilde H:\mathbb{R}^m\mapsto\mathbb{R}$, defined for $x\in\mathbb{R}^m$ as
\begin{equation}\label{eq:tH}
	\tilde H(x)=\sum_{i=1}^mx_i
\end{equation}
is a first integral for \eqref{eq:n}, in which case each $x_i=n_i$ (and similarly for \eqref{eq:s} if each $x_i=s_i$). This shows that while the state space of the decomposed systems is $\tilde C:=[0,c_1]\times[0,c_2]\times\dots\times[0,c_m]$, for a given initial condition $x(0)\tilde C$ the trajectories are contained in the $(m-1)$-dimensional manifold (hyperplane) defined by
\begin{equation}
	\qty\big{x\in\tilde C\big|\tilde H(x)-\tilde H\qty\big(x(0))=0}.
\end{equation}
For a generalized ribosome flow define $c=\sum_{i=1}^nc_i$ and for $r\in[0,c]$ let $L_r\subset\tilde C$ be the level set of $H$ corresponding to $r$; that is,
\begin{equation}
	L_r=\qty\big{a\in\tilde C:H(a)=r}.
\end{equation}

\subsubsection*{Example 1.1}\label{ex1}
As a small example let us consider the compartmental model given by $D=(Q,A)$, where
\begin{equation}
	\begin{aligned}
		Q&=\qty{q_1,q_2,q_2},\\
		A&=\qty\big{(q_1,q_2),(q_2,q_3),(q_3,q_1)}.
	\end{aligned}
\end{equation}
The topology is shown in Figure \ref{fig:comp}. 
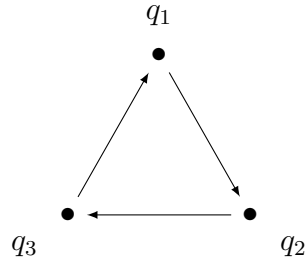
\begin{figure}[H]
	\begin{center}
		\begin{tikzpicture}[scale=0.8, every node/.style={scale=1}]
			\node[label=$q_1$] (q1) at (0,0) {$\bullet$};
			\node[label={330:$q_2$}] (q2) at (0.5*3,-0.886*3) {$\bullet$};
			\node[label={210:$q_3$}] (q3) at (-0.5*3,-0.886*3) {$\bullet$};
			\draw[-latex] (q1) -- (q2);
			\draw[-latex] (q2) -- (q3);
			\draw[-latex] (q3) -- (q1);
		\end{tikzpicture}
	\end{center}
	\caption{Compartmental graph of a triangular model}\label{fig:comp}
\end{figure}
The corresponding CRN has the following species and reactions:
\begin{equation}
	\begin{aligned}
		&\Sigma=\qty{N_1,N_2,N_3,S_1,S_2,S_3}\\
		&R_1:N_1+S_2\xrightarrow{\mathcal{K}_{12}}S_1+N_2\\
		&R_2:N_2+S_3\xrightarrow{\mathcal{K}_{23}}S_2+N_3\\
		&R_3:N_3+S_1\xrightarrow{\mathcal{K}_{31}}S_3+N_1.
	\end{aligned}
\end{equation}
It is easy to see that, indeed, the reaction graph is not weakly reversible and its deficiency is one. The dynamics of the model in the full state space is given by \eqref{eq:ns} as
\begin{equation}
	\begin{aligned}
		\dot n_1&=\mathcal{K}_{31}(n,s,t)-\mathcal{K}_{12}(n,s,t)\\
		\dot s_1&=-\mathcal{K}_{31}(n,s,t)+\mathcal{K}_{12}(n,s,t)\\
		\dot n_2&=\mathcal{K}_{12}(n,s,t)-\mathcal{K}_{23}(n,s,t)\\
		\dot s_2&=-\mathcal{K}_{12}(n,s,t)+\mathcal{K}_{23}(n,s,t)\\
		\dot n_3&=\mathcal{K}_{23}(n,s,t)-\mathcal{K}_{31}(n,s,t)\\
		\dot s_3&=-\mathcal{K}_{23}(n,s,t)+\mathcal{K}_{31}(n,s,t)\\
	\end{aligned}
\end{equation}
which can be rewritten in the reduced state space based on \eqref{eq:n} as
\begin{equation}
	\begin{aligned}
		\dot n_1&=\mathcal{K}_{31}\qty\big(n,c^{(m)}-n,t)-\mathcal{K}_{12}\qty\big(n,c^{(m)}-n,t)\\
		\dot n_2&=\mathcal{K}_{12}\qty\big(n,c^{(m)}-n,t)-\mathcal{K}_{23}\qty\big(n,c^{(m)}-n,t)\\
		\dot n_3&=\mathcal{K}_{23}\qty\big(n,c^{(m)}-n,t)-\mathcal{K}_{31}\qty\big(n,c^{(m)}-n,t).
	\end{aligned}
\end{equation}

\section{Qualitative dynamical analysis}\label{sec:qua}
In this section we show that systems of the form \eqref{eq:ns} exhibit various interesting dynamical properties that can be characterized under different assumptions of the transition rate functions. First we will consider time-invariant systems to demonstrate the regularity of equilibria. Then we return to time-varying systems to generalize the results of \cite{szederkenyi2022persistence}.

\subsection{Equilibria of time-invariant systems}
In this subsection we assume that the $\mathcal{K}_{ij}(n,s,t)$ rate functions are continuously differentiable and only depend on the variables $n_i$ and $s_j$ in a nondecreasing manner; that is, we assume that $\mathcal{K}_{ij}(n,s,t)\equiv\mathcal{K}_{ij}(n_i,s_j)$ for each $i$ and $j$. Then the results \cite[Propositions 5.5, 5.6]{szederkenyi2022persistence} show that a system of the form \eqref{eq:n} is cooperative (the name also highlights the importance of the exclusion of inhibitory phenomena), is (strongly) monotone and each level set $L_r$ contains a unique globally (relative to its level set) asymptotically stable steady state. This implies that the steady states form a linearly ordered set. For $i=1,2,\dots,m$ let $e_i:[0,c]\mapsto[0,c_i]$ denote the $i$th coordinate function of the steady state; that is, let
\begin{equation}
	e_i(r):=\lim_{t\rightarrow\infty}\rho\qty\big(t,n(0))_i
\end{equation}
where $n(0)\in L_r$ is arbitrary and $\rho\qty\big(t,n(0))$ denotes the solution at time $t$ with $\rho\qty\big(0,n(0))=n(0)$. Clearly each $e_i$ is continuous and the monotonicity of the system also shows that each $e_i$ function is strictly increasing; that is, they are differentiable almost everywhere and their derivative are positive.

\subsubsection*{Example 1.2}
Let us consider the triangular compartmental model in Figure \ref{fig:comp}. Its time evolution in the reduced state space is given in the form \eqref{eq:n} as
\begin{equation}
	\begin{aligned}
		&\dot n_1=\mathcal{K}_{31}(n_3,c_1-n_1)-\mathcal{K}_{12}(n_1,c_2-n_2),\\
		&\dot n_2=\mathcal{K}_{12}(n_1,c_2-n_2)-\mathcal{K}_{23}(n_2,c_3-n_3),\\
		&\dot n_3=\mathcal{K}_{23}(n_2,c_3-n_3)-\mathcal{K}_{31}(n_3,c_1-n_1),\\
	\end{aligned}
\end{equation}
and for the simulations we set capacities $c_1=5$, $c_2=25$, $c_3=50$. The rate functions in the different cases are assumed to have the form $\mathcal{K}_{ij}(n_i,c_j-n_j)=k_{ij}n_i(c_j-n_j)$ (corresponding to mass-action kinetics) or to be rational functions of the form 
\begin{equation*}
	\mathcal{K}_{ij}(n_i,c_j-n_j)=k_{ij}\frac{n_i^3}{(l+n_i)^3}\cdot\frac{(c_j-n_j)^3}{(l+c_j-n_j)^3}
\end{equation*}
for some $l>0$ with $k_{12}=100$, $k_{23}=40$, $k_{31}=60$. Figure \ref{fig:equilibria} shows the equilibrium curves for these rate functions with various $l$ values.

\begin{figure}[H]
	\begin{center}
		\includegraphics[scale=0.8]{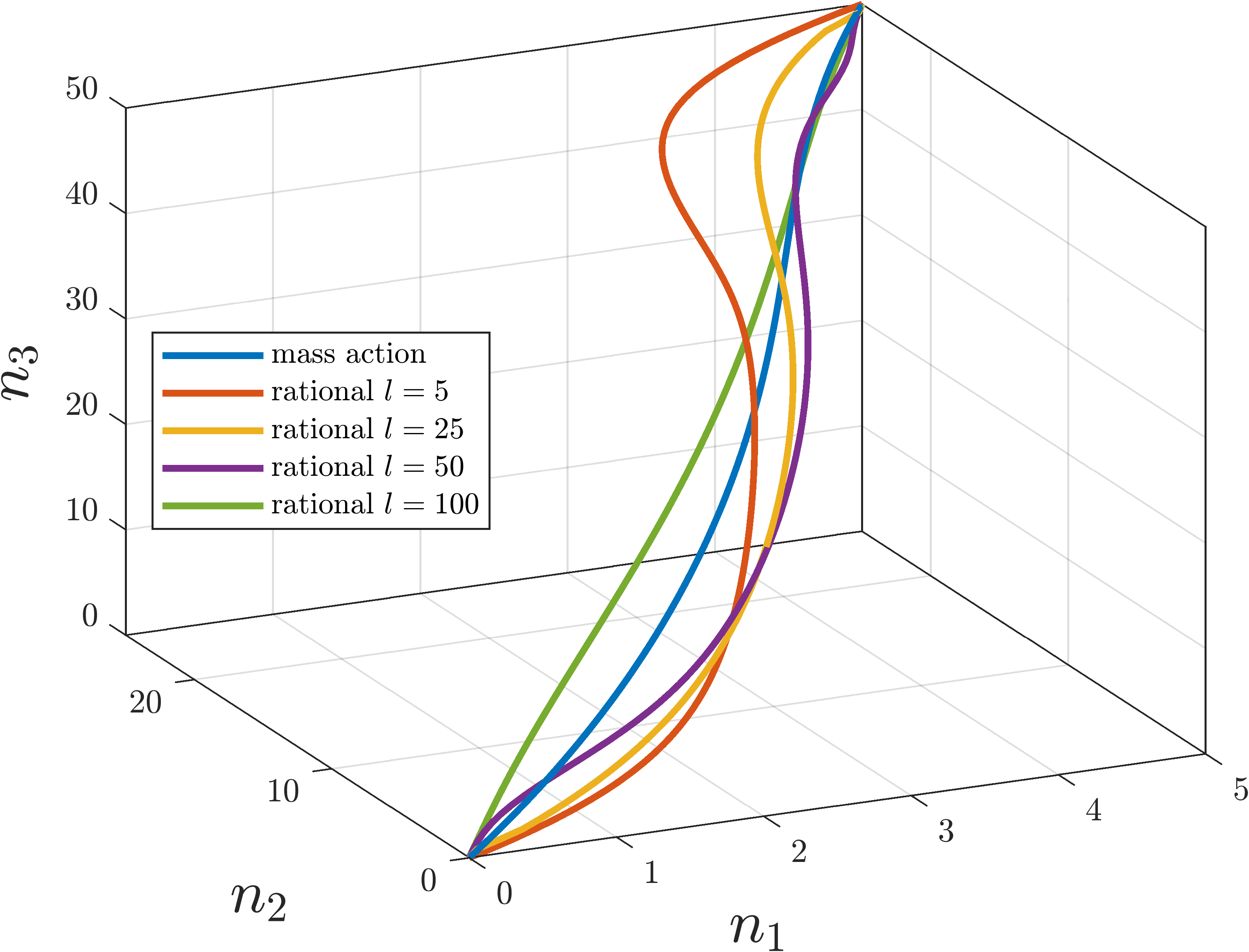}
	\end{center}
	\caption{Loci of equilibria of a triangular model as a function of total amount of modeled quantities for different $l$ saturation parameters}\label{fig:equilibria}
\end{figure}

\subsubsection*{Example 2}\label{ex:NSR}
Let us consider consider a not strongly connected compartmental model given by $D=(Q,A)$, where
\begin{equation}
	\begin{aligned}
		Q&=\qty{q_1,q_2,q_2},\\
		A&=\qty\big{(q_2,q_3),(q_3,q_2),(q_3,q_1)}.
	\end{aligned}
\end{equation}
The topology is shown in Figure \ref{fig:ncomp}. 
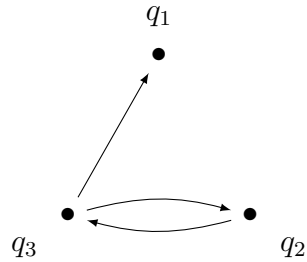
\begin{figure}[H]
	\begin{center}
		\begin{tikzpicture}[scale=0.8, every node/.style={scale=1}]
			\node[label=$q_1$] (q1) at (0,0) {$\bullet$};
			\node[label={330:$q_2$}] (q2) at (0.5*3,-0.886*3) {$\bullet$};
			\node[label={210:$q_3$}] (q3) at (-0.5*3,-0.886*3) {$\bullet$};
			\path[-latex] (q2) edge[bend left=15] (q3);
			\path[-latex] (q3) edge[bend left=15] (q2);
			\draw[-latex] (q3) -- (q1);
		\end{tikzpicture}
	\end{center}
	\caption{Compartmental graph of a not strongly connected model}\label{fig:ncomp}
\end{figure}
The corresponding CRN has the following species and reactions:
\begin{equation}
	\begin{aligned}
		&\Sigma=\qty{N_1,N_2,N_3,S_1,S_2,S_3}\\
		&R_1:N_2+S_3\xrightarrow{\mathcal{K}_{23}}S_2+N_3\\
		&R_2:N_3+S_2\xrightarrow{\mathcal{K}_{32}}S_3+N_2\\
		&R_3:N_3+S_1\xrightarrow{\mathcal{K}_{31}}S_3+N_1.
	\end{aligned}
\end{equation}
The dynamics of the system in the reduced state space is given by
\begin{equation}
	\begin{aligned}
		\dot n_1&=\mathcal{K}_{31}\qty\big(n_3,c_1-n_1)\\
		\dot n_2&=\mathcal{K}_{32}\qty\big(n_3,c_2-n_2)-\mathcal{K}_{23}\qty\big(n_2,c_3-n_3)\\
		\dot n_3&=\mathcal{K}_{23}\qty\big(n_2,c_3-n_3)-\mathcal{K}_{32}\qty\big(n_3,c_2-n_2)-\mathcal{K}_{31}\qty\big(n_3,c_1-n_1).
	\end{aligned}
\end{equation}
Since the compartmental graph is not strongly connected the persistence and stability results of \cite{szederkenyi2022persistence} are not applicable. However, empirical results show that the long-time behaviour of the system still exhibits some regularity, which can be divided into two cases base on the initial values of the system:
\begin{enumerate}
	\item If $r:=H\qty\big(n(0))\le c_1$, then
		\begin{equation*}
			\lim_{t\rightarrow\infty}n_2(t)=\lim_{t\rightarrow\infty}n_3(t)=0\quad\text{and}\quad\lim_{t\rightarrow\infty}n_1(t)=r.
		\end{equation*}
	\item If $r:=H\qty\big(n(0))>c_1$, then
		\begin{equation*}
			\lim_{t\rightarrow\infty}n_1(t)=c_1
		\end{equation*}
		and $n_1(t)$ and $n_2(t)$ will converge to the unique equilibrium on the level set
		\begin{equation*}
			\qty\big{(n_2,n_3)\in[0,c_2]\times[0,c_3]\big|n_2+n_3=r-c_1}
		\end{equation*}
		of the reduced compartmental model $D'=(Q',A')$ given by $Q'=\qty{q_2,q_3}$, $A'=\qty\big{(q_2,q_3),(q_3,q_2)}$. Note that since $D'$ is strongly connected, the results of \cite{szederkenyi2022persistence} and the above investigation can be applied.
\end{enumerate}
For the simulations we set $c_1=c_2=c_3=100$. The rate functions in the different cases are assumed to have form $\mathcal{K}_{ij}(n_i,c_j-n_j)=k_{ij}n_i(c_j-n_j)$ (corresponding to mass-action kinetics) or to be rational functions of the form 
\begin{equation*}
	\mathcal{K}_{ij}(n_i,c_j-n_j)=k_{ij}\frac{n_i}{l+n_i}\cdot\frac{c_j-n_j}{l+c_j-n_j}
\end{equation*}
for some $l>0$ with $k_{23}=15$, $k_{32}=25$, $k_{31}=35$. Figure \ref{fig:NSR_equilibria} shows the equilibrium curves for these rate functions with various $l$ values. As described by the above cases we see that until the sum of the initial value exceed the capacity of the $q_1$ compartment the equilibrium lies on the $n_1$ axis. After that the equilibrium lies on the plane $\qty\big{n_1=c_1}\subset\mathbb{R}^3$ and since $D'$ is strongly connected we have that the coordinate functions of the equilibria $e_2(r)$ and $e_3(r)$, restricted to the set $[c_1,c]$, are continuous and strictly increasing. We note that while the system is not strongly connected it exhibits many similar qualitative properties as strongly connected models. For example, for initial values satisfying $H\qty\big(n(0))>c_1$ the system is Lyapunov stable as described in \cite{Vaghy2022}.

\begin{figure}[H]
	\begin{center}
		\includegraphics[scale=0.8]{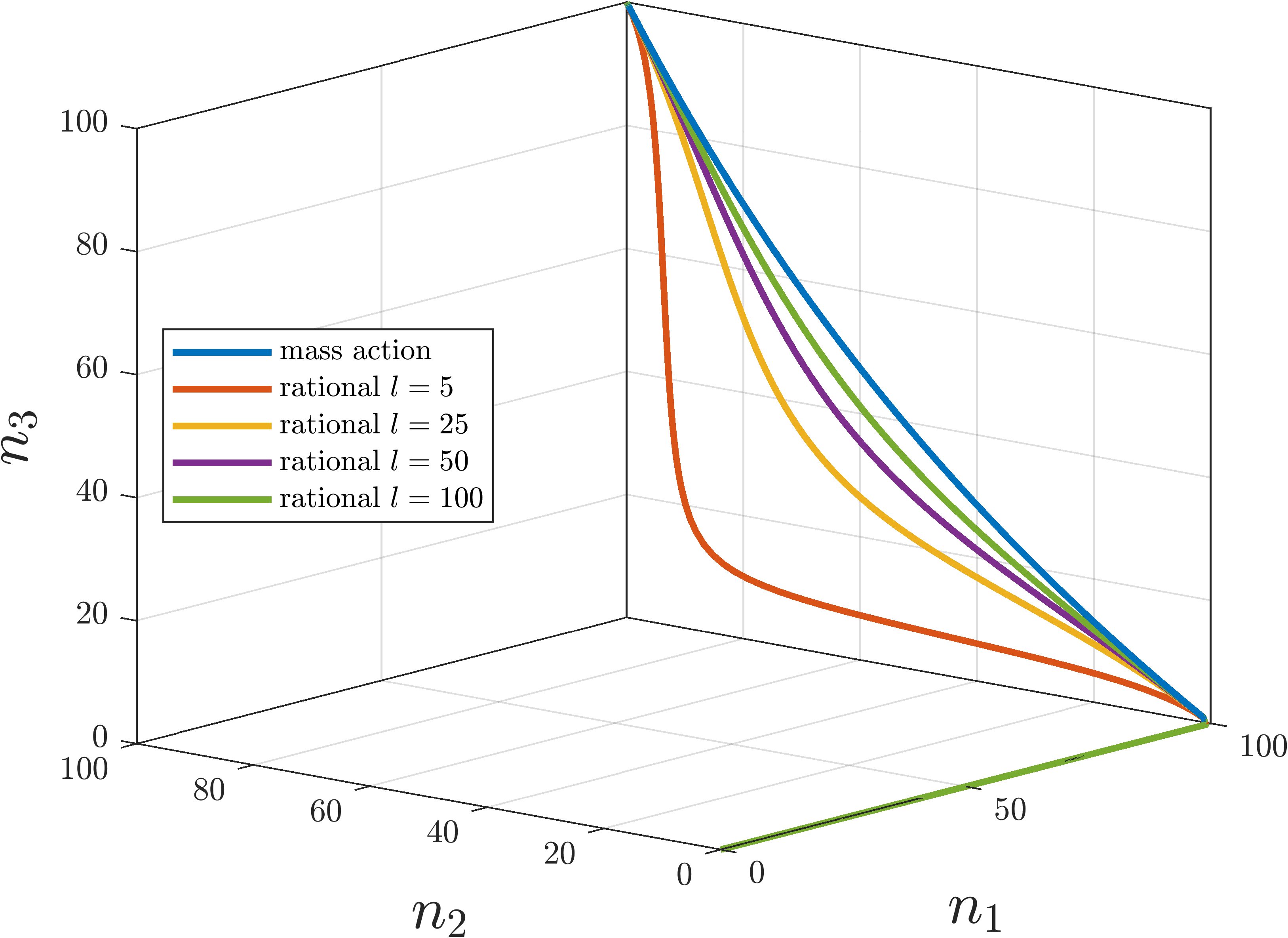}
	\end{center}
	\caption{Loci of equilibria of a not strongly connected model as a function of amount of modeled quantities for different $l$ saturation parameters}\label{fig:NSR_equilibria}
\end{figure}

\begin{rem}
	The authors hypothesize that the long-time behaviour of a compartmental model with \textbf{arbitrary} compartmental structure can be similarly described. Recall that a (compartmental) graph $D=(Q,A)$ can be written as a directed acyclic hypergraph of strongly connected components. The hypergraph will then contain three types of components:
	\begin{enumerate}
		\item we call a component trap if it does not have any outgoing edges,
		\item we call a component source if it does not have any incoming edges,
		\item we call a component intermediate if it is not a trap and not a source.
	\end{enumerate}
	Based on the initial value and the exact compartmental structure the following phenomena can be observed:
	\begin{itemize}
		\item Traps (and only traps) can become full, thus possibly creating new traps.
		\item Sources (and only sources) can become empty, thus possibly creating new sources.
		\item After a sufficient number of traps are filled and sources are emptied, the compartmental graph $D$ is decomposed into isolated strongly connected components; that is, the resulting graph is weakly reversible, in which case the results of \cite{szederkenyi2022persistence} can be applied.
	\end{itemize}
	While these observations are elementary and show that the system is stable, the equilibria are clearly non-unique with respect to the total mass of the network and in general it is not straightforward to predict from the initial value which components will fill and empty.
\end{rem}

\subsection{Persistence}\label{sub:per}
In this subsection we consider time-varying generalized ribosome flows of the form \eqref{eq:ns} only under mild regularity assumptions described by the following theorem, which is based on the results of \cite{Angeli2011} but the statements are rephrased to be more aligned with our framework. For the definition of notions related to Petri nets (e.g. siphons) and their exact connection with CRNs we refer to \cite{Angeli2011,szederkenyi2022persistence}.

\begin{thm}{\cite{Angeli2011}}\label{thm:tv_persistence}
	The dynamics of a CRN of the form \eqref{eq:kin} is persistent if
	\begin{enumerate}[(i)]
		\item Each siphon of the CRN contains a subset of species which define a positive linear conserved quantity for the dynamics.
		\item There exists a positive linear conserved quantity $c^{\mathrm{T}}x$ for the dynamics.
		\item There are nonnegative, continuous functions $\underline{\mathcal{K}}_j(x)$, $\overline{\mathcal{K}}_j(x)$ such that
			\begin{enumerate}
				\item if $x_k>\tilde x_k$ for each $k\in\mathrm{supp}(y_j)$, then $\underline{\mathcal{K}}_j(x)>\underline{\mathcal{K}}_j(\tilde x)$ (and similarly for $\overline{\mathcal{K}}_j$) holds for each $j=1,2,\dots,R$, and
				\item for each $j=1,2,\dots,R$, for all $x\in\mathbb{R}_+^N$ and for all $t\ge0$ we have $\underline{\mathcal{K}}_j(x)\le\mathcal{K}_j(x,t)\le\overline{\mathcal{K}}_j(x)$.
			\end{enumerate}
	\end{enumerate}
\end{thm}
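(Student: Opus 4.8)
The plan is to argue by contradiction and reduce the statement to the classical fact that the vanishing set of a boundary $\omega$-limit point of an interior trajectory is a siphon. First I would use condition (ii) to control the solutions: if $c^{\mathrm{T}}x$ is a positive linear conserved quantity, then along any solution $c^{\mathrm{T}}x(t)\equiv c^{\mathrm{T}}x(0)$, and since every $c_k>0$ and $x(t)\in\overline{\mathbb{R}}_+^N$ each coordinate $x_k(t)$ is confined to the compact interval $[0,\,c^{\mathrm{T}}x(0)/c_k]$. Hence every forward solution is bounded and complete, so its $\omega$-limit set $\Omega$ is nonempty, compact, connected and invariant. Now assume, for contradiction, that the dynamics is not persistent: there is an interior initial state $x(0)\in\mathbb{R}_+^N$ such that $\Omega$ contains a point $p$ with $p_k=0$ for some $k$. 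Set $Z:=\qty{k:p_k=0}\neq\emptyset$.

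The crucial step is to show that $Z$ is a siphon, i.e. that every reaction $R_j$ with $\mathrm{supp}(y_{j'})\cap Z\neq\emptyset$ also has $\mathrm{supp}(y_j)\cap Z\neq\emptyset$. This is exactly where conditions (iii)(a)--(b) together with the standing invariance hypothesis (namely $\mathcal{K}_j(x,t)=0$ whenever $x_k=0$ for some $k\in\mathrm{supp}(y_j)$) enter: the envelope $\underline{\mathcal{K}}_j$ is continuous, nonnegative, strictly increasing in the support variables and dominated by $\mathcal{K}_j$, so it vanishes precisely on the faces $\qty\big{x_k=0,\ k\in\mathrm{supp}(y_j)}$ and is bounded below by a positive constant on every compact set avoiding those faces, \emph{uniformly in} $t$. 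Following the Petri-net persistence argument of \cite{Angeli2011}: if $Z$ were not a siphon, some reaction would inject mass into the $Z$-species while all of its reactant species stay bounded away from zero near $p$, hence that reaction proceeds at a rate bounded below uniformly in time in a neighbourhood of $p$; a Butler--McGehee / tangent-cone estimate applied to the function $\sum_{k\in Z}x_k$ along the trajectory then contradicts $p$ being an $\omega$-limit point of an interior solution. I expect this to be the main obstacle; it is essentially the content imported from \cite{Angeli2011}, and the sole role of the time-independent envelopes $\underline{\mathcal{K}}_j,\overline{\mathcal{K}}_j$ is to make these estimates uniform in $t$ so that the time-invariant argument applies verbatim.

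Finally, with $Z$ known to be a siphon, condition (i) supplies a subset $Z'\subseteq Z$ and strictly positive coefficients $d_k$ $(k\in Z')$ such that $V(x):=\sum_{k\in Z'}d_kx_k$ is conserved by the dynamics, i.e. $\sum_{k\in Z'}d_k[y_{j'}-y_j]_k=0$ for every $j$, whence $\tfrac{d}{dt}V\qty\big(x(t))\equiv0$. Since $x(0)$ is interior we have $V\qty\big(x(0))>0$, so $V\qty\big(x(t))\equiv V\qty\big(x(0))>0$ for all $t\ge0$. Taking $t_n\to\infty$ with $x(t_n)\to p$ and passing to the limit gives $V(p)=V\qty\big(x(0))>0$; but $p_k=0$ for every $k\in Z'\subseteq Z$, so $V(p)=0$, a contradiction. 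Therefore no interior trajectory has an $\omega$-limit point on the boundary, i.e. the dynamics is persistent.
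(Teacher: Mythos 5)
This theorem is stated in the paper as a quoted result from \cite{Angeli2011} with no proof of its own, so the only meaningful comparison is with the source's argument, which your sketch reproduces faithfully: boundedness of trajectories from the positive linear conservation law (ii), the zero-set of a boundary $\omega$-limit point being a siphon (the step that genuinely needs the time-independent envelopes of condition (iii) to make the estimates uniform in $t$), and the contradiction between the positive conserved quantity supported inside that siphon from (i) and its vanishing at the limit point. Your outline is correct and follows essentially the same route as the cited proof; the one step you defer -- the siphon lemma for time-varying rates -- is exactly the content imported from \cite{Angeli2011}, which is also all the paper itself does.
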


To verify condition (i) we would, in general, need to enumerate all siphons of the CRN, which is well-known to be an NP-hard problem. However, in our recent paper \cite{szederkenyi2022persistence} we explicitly characterized the siphons of a CRN assigned to a strongly connected compartmental models in the time-invariant case. However, one can observe that conditions (i) and (ii) of \ref{thm:tv_persistence} are independent of the choice of transition rates and even independent from whether the system is time-invariant or not; that is, our results, formulated in the following theorem, hold for time-varying compartmental systems as well.
\begin{thm}{\cite[Corollary 4.6]{szederkenyi2022persistence}}
	A siphon in the Petri net of a strongly connected compartmental graph either contains the vertices $N_i$ and $S_i$ corresponding to the same compartment $q_i$, or it contains all the vertices $N_1,N_2,\dots,N_m$ or $S_1,S_2,\dots,S_m$.
\end{thm}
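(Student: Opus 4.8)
The plan is to unwind what it means for a set of species to be a siphon in the CRN attached to the compartmental graph, and then to exploit strong connectivity. Recall that to each edge $(q_i,q_j)\in A$ there corresponds the reaction $N_i+S_j\to N_j+S_i$, whose reactant support is $\{N_i,S_j\}$ and product support is $\{N_j,S_i\}$; these two sets are disjoint since loops are excluded ($i\neq j$). A nonempty set $\Sigma_0$ of species is a siphon precisely when every reaction that produces a species of $\Sigma_0$ also consumes one. For the CRN at hand this reads: for every $(q_i,q_j)\in A$, if $N_j\in\Sigma_0$ or $S_i\in\Sigma_0$, then $N_i\in\Sigma_0$ or $S_j\in\Sigma_0$.

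I would then introduce the index sets $I_N=\{i:N_i\in\Sigma_0\}$ and $I_S=\{i:S_i\in\Sigma_0\}$ and prove the dichotomy in the following form: if $\Sigma_0$ is a siphon that contains no ``full pair'' $\{N_i,S_i\}$ --- equivalently, if $I_N\cap I_S=\emptyset$ --- then $I_N=\{1,\dots,m\}$ or $I_S=\{1,\dots,m\}$. Since a siphon is nonempty, at least one of $I_N$, $I_S$ is nonempty. The two cases are interchanged by relabelling $N_i\leftrightarrow S_i$ and reversing every edge of $D$ (an operation that preserves the class of CRNs of strongly connected compartmental graphs), so it suffices to treat the case $I_N\neq\emptyset$.

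The heart of the argument is a ``backward closure'' property of $I_N$: if $j\in I_N$ and $(q_i,q_j)\in A$, then the siphon condition applied to $N_i+S_j\to N_j+S_i$ forces $N_i\in\Sigma_0$ or $S_j\in\Sigma_0$; but $j\in I_N$ together with $I_N\cap I_S=\emptyset$ rules out $S_j\in\Sigma_0$, so $i\in I_N$. Hence every donor $i\in\mathcal{D}_j$ of a compartment $j\in I_N$ again lies in $I_N$. Now fix some $j\in I_N$; since $D$ is strongly connected, every vertex $k$ admits a directed path $k=k_0\to k_1\to\dots\to k_r=j$, and applying the closure property repeatedly along this path (from $k_r$ back to $k_0$) gives $k\in I_N$. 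Therefore $I_N=\{1,\dots,m\}$, as claimed; the case $I_S\neq\emptyset$ is the dual statement, using the ``forward closure'' that every receptor $j\in\mathcal{R}_i$ of a compartment $i\in I_S$ lies in $I_S$, again propagated by strong connectivity.

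I do not anticipate a genuine obstacle: the proof is short and purely combinatorial, and it coincides with the argument behind \cite[Corollary 4.6]{szederkenyi2022persistence}. The only point that needs care is bookkeeping the orientations --- a siphon's defining condition runs from products back to reactants, which translates into closure of $I_N$ under donors (edges pointing into a vertex) and of $I_S$ under receptors (edges pointing out of a vertex), and these must be matched with the two directions in which strong connectivity is invoked. It is also worth recording that the argument uses only the reaction graph, not the rate functions, which is precisely why conditions (i) and (ii) of Theorem \ref{thm:tv_persistence} remain valid for time-varying kinetics.
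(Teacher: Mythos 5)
Your proof is correct: the siphon condition for the reactions $N_i+S_j\to N_j+S_i$ yields exactly the donor-closure of $I_N$ and receptor-closure of $I_S$ under the no-full-pair assumption, and strong connectivity then forces $I_N$ or $I_S$ to be all of $\{1,\dots,m\}$. The paper itself does not reproduce a proof but imports the statement from \cite{szederkenyi2022persistence}, and your argument is essentially the same combinatorial closure-plus-connectivity reasoning used there, so no further comparison is needed.
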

Then the conclusions of Section \ref{sec:conservation} show that conditions (i) and (ii) are satisfied by virtue of the first integrals \eqref{eq:tH} and \eqref{eq:H}, respectively.

It is not straightforward to determine exactly what types of reaction rates satisfy condition (iii). For the sake of specificity, we characterize a class of reaction rates of special interest which can be written in the following form
\begin{equation}\label{eq:K}
	\mathcal{K}_{ij}(n,s,t)=k_{ij}(t)\frac{\theta_i(n_i)\nu_j(s_j)}{1+\Psi_{ij}(n,s)}
\end{equation}
where we assume that the transformations $\theta_i,\nu_j\in\mathcal{C}^1(\mathbb{R})$ are nondecreasing, have $\theta_i(0)=\nu_j(0)=0$ and satisfy $\int_0^1|\log\theta_i(r)|\dd{r}<\infty$ and $\int_0^1|\log\nu_j(r)|\dd{r}<\infty$ for each $i,j=1,2,\dots,m$. We also assume that the functions $\Psi_{ij}$ take the form
\begin{equation}
	\Psi_{ij}(n,s)=\sum\alpha_{r^{(1)},r^{(2)}}\prod_{l=1}^m\theta_l^{r_l^{(1)}}(n_l)\nu_l^{r_l^{(2)}}(s_l)
\end{equation}
where $r^{(1)},r^{(2)}\in\mathbb{N}^m$ and $\alpha_{r^{(1)},r^{(2)}}\in\overline{\mathbb{R}}_+$. We further assume that for $k_{ij}(t)$ there exist $\underline k_{ij},\overline k_{ij}>0$ such that $\underline k_{ij}\le k_{ij}(t)\le\overline k_{ij}$ for all $t\ge0$. In this case we have
\begin{equation}
	\underline{\mathcal{K}}_{ij}(n_i,s_j):=\frac{\underline k_{ij}}{1+\Psi_{ij}(c^{(m)},c^{(m)})}\theta_i(n_i)\nu_j(s_j)\le\mathcal{K}_{ij}(n_i,s_j,t)\le\overline k_{ij}\theta_i(n_i)\nu_j(s_j)=:\overline{\mathcal{K}}_{ij}(n_i,s_j)
\end{equation}
which are clearly monotonous in the sense of Theorem \ref{thm:tv_persistence}, and thus condition (i) is satisfied and the system is persistent.

\begin{rem}\label{rem:eps}
	The above investigation and, in particular, condition (iii) of Theorem \ref{thm:tv_persistence} shows that Lemmata 5.1, 5.2 and Remark 5.3 of \cite{szederkenyi2022persistence} can be modified to the time-varying case; that is, for a system of the form \eqref{eq:ns} with strongly connected compartmental graph and reaction rates of the form \eqref{eq:K}, for each $\tau>0$ there exists $\epsilon(\tau)>0$ with $\epsilon(\tau)\rightarrow0$ as $\tau\rightarrow0$ such that $n_i(t),s_i(t)\in[\epsilon,c_i-\epsilon]$ holds for each $i=1,2,\dots,m$ and $t\ge\tau$.
\end{rem}

The denominator of \eqref{eq:K} contains positive terms which can be interpreted as the inhibitory effect of other species, and the time-varying coefficient $k_{ij}(t)$ introduces the dependence of the system parameters on various factors such as temperature or the dynamical behaviour of other species that are not explicitly modelled as state variables. This class of rate functions contains many well-known examples, demonstrating the range and flexibility of reaction rates of the above form:
\begin{enumerate}
	\item Setting each $\theta_i(n_i)=n_i$ and $\nu_j(s_j)=s_j$ and $\Psi_{ij}(n,s)=0$ we obtain the case of classical mass-action kinetics with time-varying rate coefficients: $\mathcal{K}_{ij}(n,s,t)=k_{ij}(t)n_is_j$.
	\item Setting each $\theta_i(n_i)=n_i$ and $\nu_j(s_j)=s_j$ and $\Psi_{ij}(n,s)=l^2-1+ln_i+ls_j+n_is_j$ for some $l>0$ yields
		\begin{equation}
			\mathcal{K}_{ij}(n,s,t)=k_{ij}(t)\frac{n_is_j}{(l+n_i)(l+s_j)}
		\end{equation}
		corresponding to simple saturating kinetics described by the Monod equation.
	\item The previous example can also be obtained by setting $\theta_i(n_i)=\frac{n_i}{l+n_i}$ and $\nu_j(s_j)=\frac{s_j}{l+s_j}$ and $\Psi_{ij}(n,s)=0$, showing that \eqref{eq:K} is not unique. Notice however, that for fixed $\theta_i,\nu_j$ transformations the function $\Psi_{ij}$, and thus the fraction itself, is unique.
	\item Setting each $\theta_i(n_i)=\frac{n_i^L}{l+n_i^L}$ and $\nu_j(s_j)=\frac{s_j^L}{l+s_j^L}$ for some $l>0$ yields the classical Hill kinetics.
\end{enumerate}

\subsubsection*{Example 1.3}
Let us again consider the triangular compartmental model from Figure \ref{fig:comp}. For this example we set $c_1=c_2=c_3=100$, $l=100$ and
\begin{equation}
	\begin{aligned}
		\mathcal{K}_{12}(n_1,c_2-n_2,t)&=k_{12}(t)\frac{n_1(c_2-n_2)}{(l+n_1)(l+c_2-n_2)},\\
		\mathcal{K}_{23}(n_2,c_3-n_3,t)&=k_{23}(t)\frac{n_2(c_3-n_3)}{(l+n_2)(l+c_3-n_3)},\\
		\mathcal{K}_{31}(n_3,c_1-n_1,t)&=k_{31}(t)\frac{n_3(c_1-n_1)}{(l+n_3)(l+c_1-n_1)},
	\end{aligned}
\end{equation}
where the coefficient functions are considered to be exponentially decaying perturbations of the nominal values
\begin{equation}
	\bar k_{12}=40\qquad\bar k_{23}=25\qquad\bar k_{31}=50
\end{equation}
of the form
\begin{equation}
	k_{12}(t)=\bar k_{12}\qty\big(1+e^{-\frac{3t}{100}})\qquad k_{23}\qty\big(1+e^{-\frac{5t}{100}})\qquad k_{31}\qty\big(1+e^{-\frac{2t}{100}}).
\end{equation}
As a comparison let us consider the solution $\tilde n(t)$ of the time-invariant system with the above nominal values. Figure \ref{fig:tv:phase} shows the phase portrait of the perturbed and the original systems starting from various initial conditions with $H\qty\big(n(0))=150$. Figure \ref{fig:tv:time} shows the time evolution of the state variables with $n(0)=[5~45~100]^{\mathrm{T}}$, where the state variables of the perturbed and the time-invariant system are depicted with blue lines and red lines, respectively. We can observe that since the time dependent terms are exponentially decaying and both systems evolve on the same linear manifold, the systems tend to the same equilibrium, as expected.
\begin{figure}[H]
	\begin{subfigure}[b]{0.49\textwidth}
		\centering
		\includegraphics[width=\textwidth]{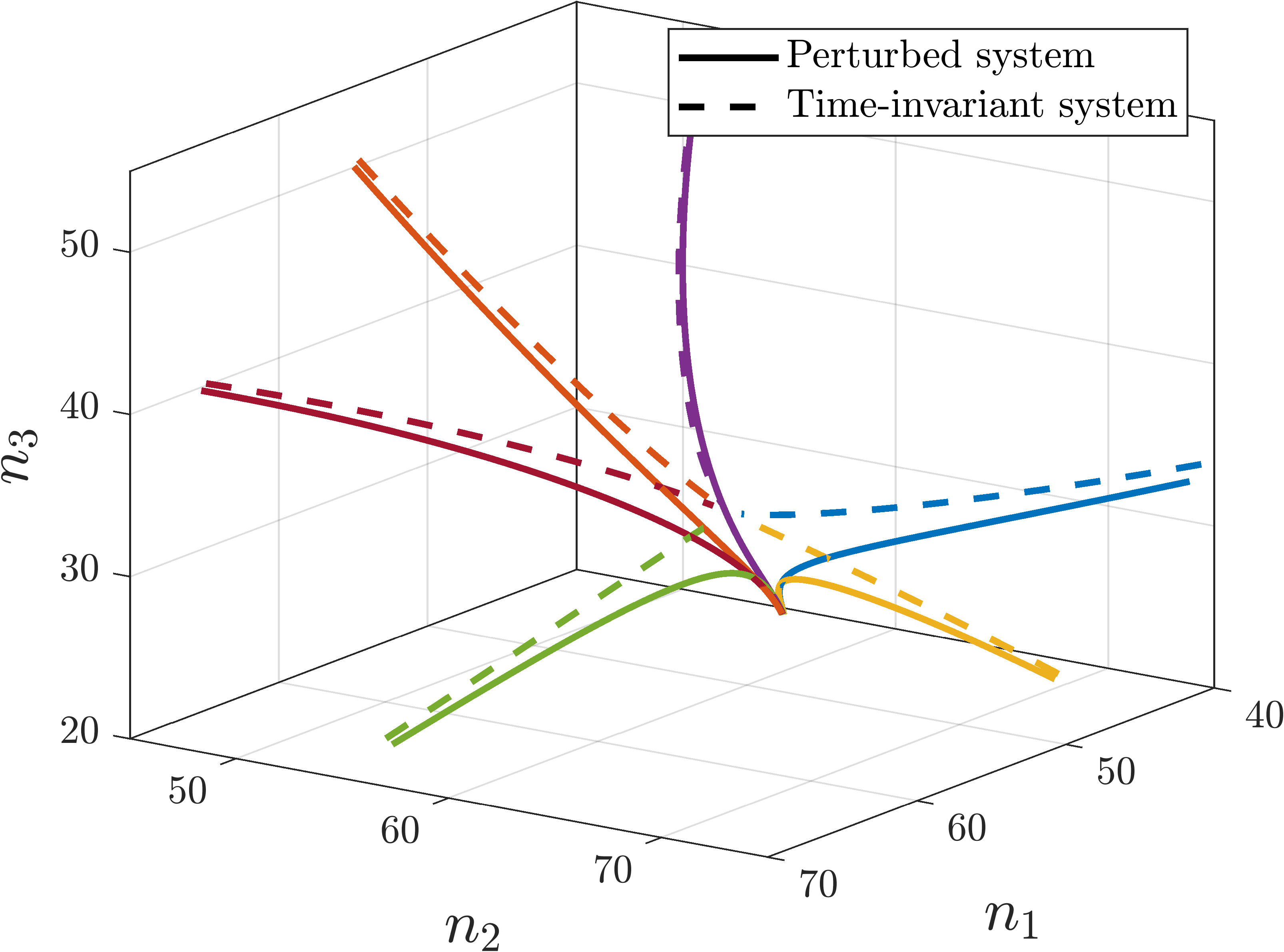}
		\caption{Phase portrait}\label{fig:tv:phase}
	\end{subfigure}
	\hfill
	\begin{subfigure}[b]{0.49\textwidth}
		\centering
		\includegraphics[width=\textwidth]{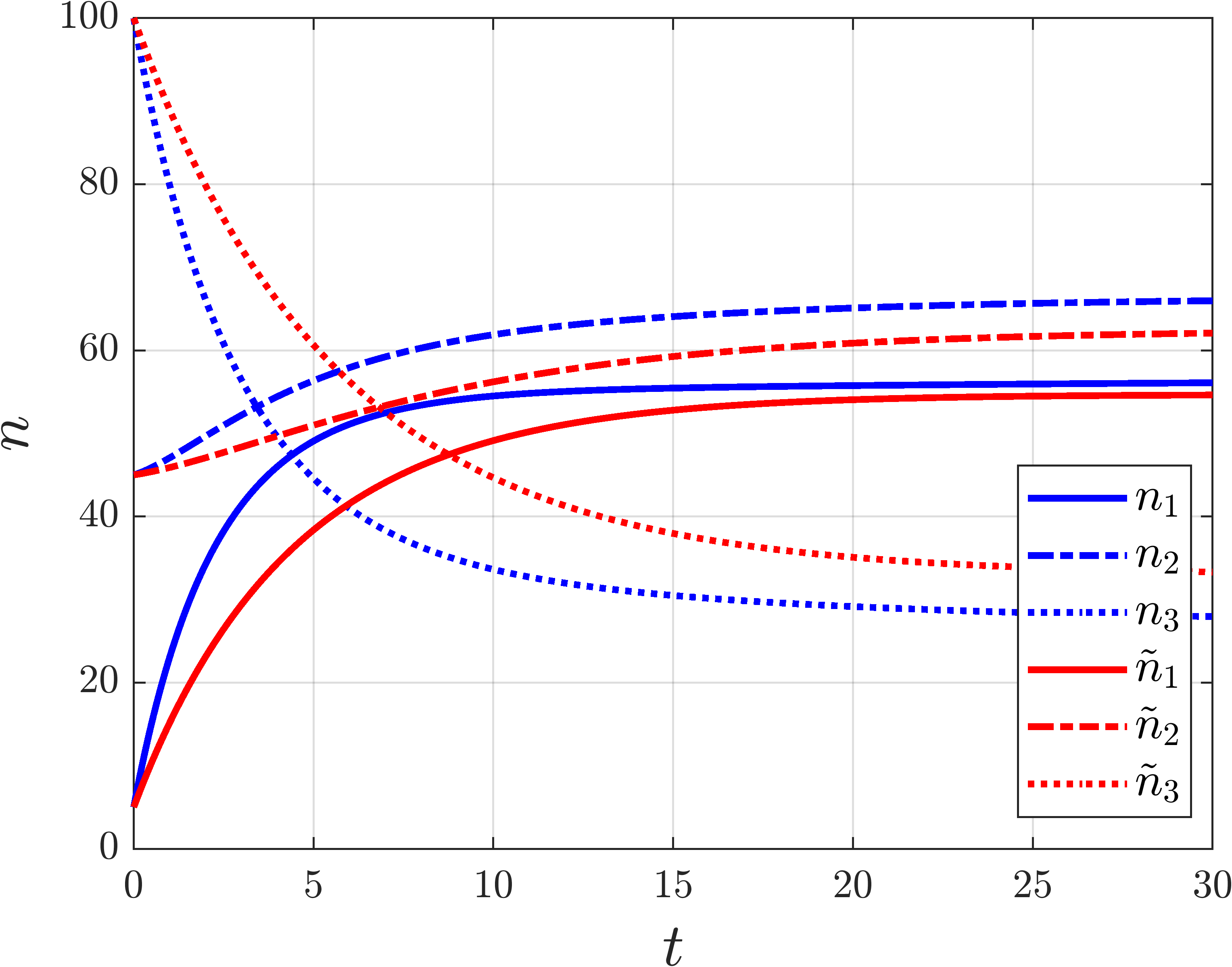}
		\caption{Time evolution of state variables}\label{fig:tv:time}
	\end{subfigure}
	\caption{Trajectories and time evolution of a time-varying model with decaying time dependence}
\end{figure}

\subsection{Periodic behaviour}
Let us consider a generalized ribosome flow in the reduced state space of the form \eqref{eq:n} with transition rates of the form \eqref{eq:K} and assume that the transition functions are $\mathcal{C}^1$ and periodic with the same period (but possibly different phase). Write \eqref{eq:n} as $\dot n=F(t,n)$ and assume that the right-hand side satisfies the following monotonicity condition: $F_i(t,x)\le F_i(t,y)$ for any two distinct points $x,y\in\tilde C$ such that $x_i=y_i$ and $x_j\le y_j$ for $j\neq i$. This condition is satisfied if, for example, the transition rates are such that $\Psi_{ij}\equiv0$; that is, if there are no inhibitory phenomena. Then the system phase locks (or entrains) with the periodic excitations.

\begin{thm}
	Consider a system of the form \eqref{eq:n} satisfying the above monotonicity assumption, where each $\mathcal{K}_{ij}(t)$ is periodic with a common period $T$. Then for each $r\in[0,c]$ there exists a unique periodic function $\phi_r:\overline{\mathbb{R}}_+:\mapsto\tilde C$ with period $T$ such that for all $a\in L_r$ we have that
	\begin{equation}
		\lim_{t\rightarrow\infty}\norm{\rho(t,a)-\phi_r(t)}_{L^1}=0.
	\end{equation}
\end{thm}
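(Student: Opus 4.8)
The plan is to realise the entrained orbit as a fixed point of the period-$T$ Poincaré map on a conserved level set and to get global attraction from an $L^1$ non-expansiveness estimate resting on cooperativity together with the mass conservation of \eqref{eq:n}. First I would record the phase-space structure: the rates \eqref{eq:K} vanish at $n_i=0$ (since $\theta_i(0)=0$) and at $n_i=c_i$, i.e.\ $s_i=0$ (since $\nu_i(0)=0$), so the vector field of \eqref{eq:n} points into $\tilde C$ along $\partial\tilde C$ and $\tilde C$ is forward invariant; and since $\tilde H$ is a first integral (Section~\ref{sec:conservation}), each $L_r$ is compact, convex and forward invariant. Writing $\rho(t,\cdot)$ for the flow, the period-$T$ map $P:=\rho(T,\cdot)$ is a continuous self-map of $L_r$, and $T$-periodicity of the rates makes $\rho(\cdot,a_r^*)$ a $T$-periodic solution for any fixed point $a_r^*$ of $P$.

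The analytic core is that $t\mapsto\norm{\rho(t,a)-\rho(t,b)}_{L^1}$ is non-increasing for all $a,b\in\tilde C$. For two solutions $x,z$ of \eqref{eq:n} set $w=x\vee z$ and $v=x\wedge z$ (componentwise maximum and minimum). For each index $i$ with $x_i\ge z_i$ one has $w_i=x_i$, $w\ge x$ and $v_i=z_i$, $v\le z$, so the monotonicity hypothesis gives $F_i(t,x)\le F_i(t,w)$ and $F_i(t,v)\le F_i(t,z)$, whence the upper Dini derivative of $|x_i-z_i|$ is $\le F_i(t,w)-F_i(t,v)$; the case $z_i\ge x_i$ is symmetric. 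Summing over $i$ and using $\sum_iF_i\equiv0$, the Dini derivative of $\norm{x(t)-z(t)}_{L^1}$ is non-positive, so the flow is $L^1$ non-expansive on every $L_r$.

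Brouwer's theorem applied to the continuous self-map $P$ of the compact convex set $L_r$ yields a fixed point $a_r^*$; put $\phi_r(t):=\rho(t,a_r^*)$. For $a\in L_r$ let $V(t)=\norm{\rho(t,a)-\phi_r(t)}_{L^1}$; by the estimate $V$ is non-increasing, hence $V(t)\to V_\infty\ge0$. If $V_\infty>0$, pick $b$ in the (nonempty, $P$-invariant) $\omega$-limit set of $\{\rho(nT,a)\}_{n\ge0}$ in $L_r$; then $\norm{b-a_r^*}_{L^1}=\norm{Pb-a_r^*}_{L^1}=V_\infty$, so $t\mapsto\norm{\rho(t,b)-\phi_r(t)}_{L^1}$ is non-increasing with equal values at $t=0$ and $t=T$ and is therefore constant on $[0,T]$. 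Ruling this out is the step I expect to be the main obstacle: non-expansiveness alone permits it, since isometries such as rotations are non-expansive but not contracting. Here I would invoke irreducibility of the coupling of $F$ — which holds when the compartmental graph is strongly connected, as assumed elsewhere in this section (cf.\ Remark~\ref{rem:eps}) — so that the flow is strongly monotone and a strengthening of the Dini estimate shows $\norm{\rho(t,b)-\phi_r(t)}_{L^1}$ strictly decreases on any interval on which the two solutions differ; this contradicts constancy unless $\rho(\cdot,b)\equiv\phi_r$, forcing $V_\infty=0$, i.e.\ $\rho(t,a)-\phi_r(t)\to0$ in $L^1$. The same strict-decrease argument applied to any two $T$-periodic solutions of \eqref{eq:n} lying in $L_r$ — whose $L^1$-distance is $T$-periodic and non-increasing, hence constant, hence zero — gives uniqueness of $\phi_r$.
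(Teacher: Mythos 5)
Your overall route is sound and genuinely different from the paper's proof, which is a one-line citation: the paper invokes the known entrainment results for periodic cooperative systems possessing a first integral with positive gradient (\cite{Tang1993}, \cite{Ji-Fa1996}), whereas you reconstruct that machinery from scratch (Brouwer applied to the period map on the compact convex level set $L_r$, plus $L^1$ non-expansiveness from cooperativity and $\sum_iF_i\equiv0$). Your invariance, conservation, non-expansiveness and fixed-point steps are correct, and they are indeed the skeleton of the cited results; re-proving rather than citing buys self-containedness, but at the price of having to supply the hardest step yourself.

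That hardest step --- strictness --- is where you have a genuine gap. First, the claim that the $L^1$ distance ``strictly decreases on any interval on which the two solutions differ'' is false as a statement about the flow: if $a\le b$ are ordered, order preservation together with conservation of $\tilde H$ in \eqref{eq:tH} gives $\norm{\rho(t,a)-\rho(t,b)}_{L^1}=\tilde H(b)-\tilde H(a)$ for all $t$, i.e.\ the distance is constant, not decreasing. What saves your application of it is exactly the ingredient the paper's proof singles out, the positivity of $\nabla\tilde H$: two distinct points of the same level set $L_r$ can never be ordered, so the ordered case cannot occur for the pairs you compare --- but you never state or use this. Second, ``a strengthening of the Dini estimate'' is not enough as written: the slack in your componentwise bound comes from strict monotonicity of $F_i$ in the off-diagonal variables, and for rates of the form \eqref{eq:K} the relevant partial derivatives vanish on parts of $\partial\tilde C$ (e.g.\ where $n_j=0$ or $n_i=c_i$), so the pointwise strict inequality can fail there; likewise Jacobian irreducibility (hence strong monotonicity) degenerates on the boundary and needs persistence (Remark \ref{rem:eps}) to push trajectories into the interior first, and it requires strong connectivity of the compartmental graph, which should be stated as a hypothesis since the theorem as written does not list it. The standard repair is the integrated comparison rather than a differential one: for unordered $a,b$ let $u,v$ solve \eqref{eq:n} from $a\vee b$ and $a\wedge b$; strong monotonicity gives $u(t)\gg\rho(t,a)\vee\rho(t,b)$ and $v(t)\ll\rho(t,a)\wedge\rho(t,b)$ for $t>0$, and conservation of $\tilde H$ then yields $\norm{\rho(t,a)-\rho(t,b)}_{L^1}<\tilde H(a\vee b)-\tilde H(a\wedge b)=\norm{a-b}_{L^1}$. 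With that substitution your limit-set and uniqueness arguments go through.
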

\begin{proof}
	The properties of the rate functions and the fact that $\nabla H$ is positive implies the result via \cite{Tang1993}, \cite{Ji-Fa1996}.
\end{proof}

\begin{rem}
	Since, in some sense, time-invariant systems can be seen as periodic, the stability result \cite[Proposition 5.5]{szederkenyi2022persistence} is a special case of the above theorem, where $\phi_r$ is reduced to a single point of the manifold $L_r$.
\end{rem}

\subsubsection*{Example 1.4}
Let us again consider the triangular compartmental model from Figure \ref{fig:comp}. For this example we set $c_1=c_2=c_3=100$ and
\begin{equation}
	\begin{aligned}
		\mathcal{K}_{12}(n_1,c_2-n_2,t)&=100\qty\big(3+2\cos(t+0.5))\frac{n_1(c_2-n_2)}{(l+n_1)(l+c_2-n_2)},\\
		\mathcal{K}_{23}(n_2,c_3-n_3,t)&=100\qty\big(7+5\sin(3t-2.5))\frac{n_2(c_3-n_3)}{(l+n_2)(l+c_3-n_3)},\\
		\mathcal{K}_{31}(n_3,c_1-n_1,t)&=100\qty\big(2+\cos(2t-1))\frac{n_3(c_1-n_1)}{(l+n_3)(l+c_1-n_1)},
	\end{aligned}
\end{equation}
which clearly have the same period $T=2\pi$. Figures \ref{fig:ent:phase} and \ref{fig:ent:time} show the phase portrait of the system starting from various initial conditions with $l=100$, $H\qty\big(n(0))=150$ and the time evolution of the state variables with $n(0)=[5~45~100]^{\mathrm{T}}$, respectively.
\begin{figure}[H]
	\begin{subfigure}[b]{0.49\textwidth}
		\centering
		\includegraphics[width=\textwidth]{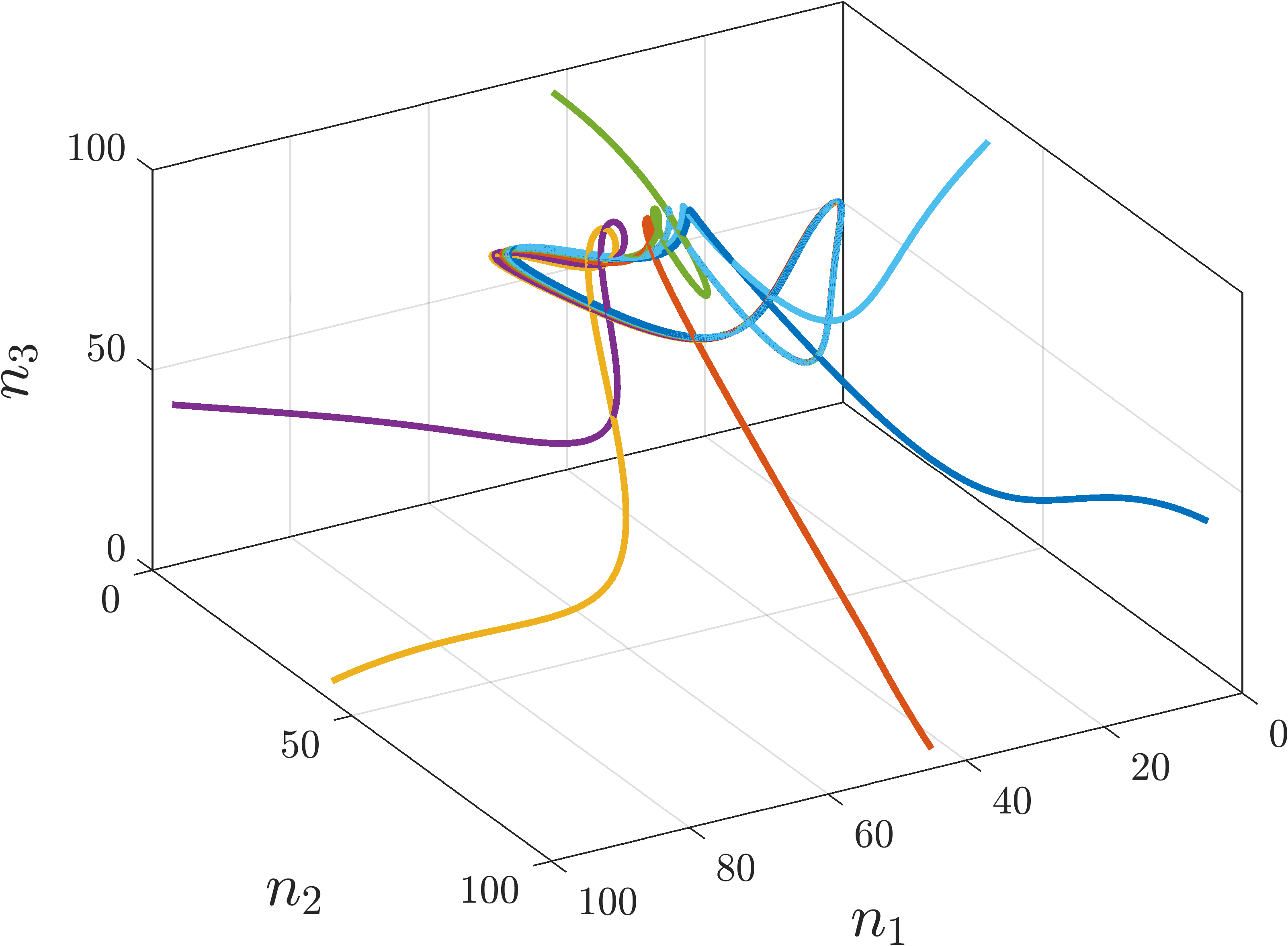}
		\caption{Phase portrait}\label{fig:ent:phase}
	\end{subfigure}
	\hfill
	\begin{subfigure}[b]{0.49\textwidth}
		\centering
		\includegraphics[width=\textwidth]{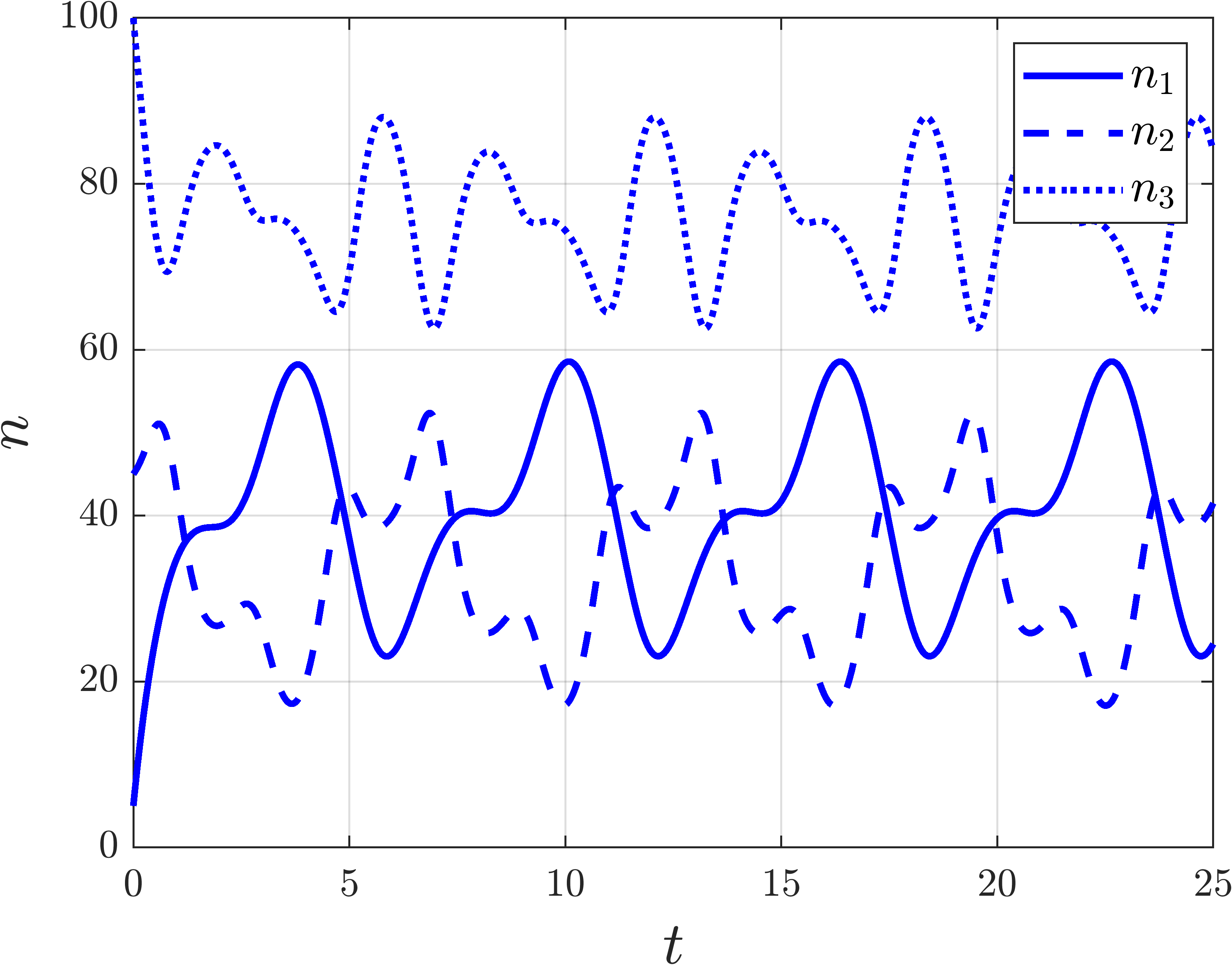}
		\caption{Time evolution of state variables}\label{fig:ent:time}
	\end{subfigure}
	\caption{Entrainment of a strongly connected model with periodic transition rates}
\end{figure}

\section{Stability analysis}\label{sec:sta}
In this section we show that generalized ribosome flows with reaction rate functions of the form \eqref{eq:K} with piecewise locally Lipschitz $k_{ij}(t)$ coefficients satisfy a certain notion of robustness to the changes in the time-varying rate functions that can be traced back to the input-to-state stability of rate-controlled biochemical networks thoroughly investigated in \cite{Chaves2005}. The main difficulty in applying these results lies in the aforementioned fact that the CRN assigned to a compartmental model is generally not weakly reversible and its deficiency is generally not zero (see, Theorem \ref{thm:deficiency}) even if the compartmental topology is strongly connected. In order to circumvent this, we will perform a model reduction and rewrite \eqref{eq:n} by factoring out certain terms. Let us first state the most important notions and results of \cite{Chaves2005}.\newline

Consider the system corresponding to a CRN with $R$ reactions
\begin{equation}\label{eq:dotxf}
	\dot x=f(x,u)=\sum_{i=1}^R\sum_{j=1}^Ru_{ij}(t)\prod_{l=1}^n\theta_i^{y_{ij}}(x_i)[y_i-y_j],
\end{equation}
where the nonnegative functions $u_{ij}$ are piecewise locally Lipschitz with a finite number of discontinuities and the stoichiometric coefficient vectors $y_i,y_j$ are as described in \ref{subsec:CRNs}. In this section, however, we restrict the conditions on the transformation functions $\theta_i:\overline{\mathbb{R}}_+\mapsto[0,\infty)$. Namely, we assume that
\begin{enumerate}[(a)]
	\item $\theta_i$ is real analytic,
	\item $\theta_i(0)=0$,
	\item $\int_0^1|\log\theta_i(r)|\dd{r}<\infty$
	\item $\theta_i$ is strictly increasing and onto the set $[0,\sigma_i)$ for some $\sigma_i\in[0,\infty)$,
	\item $\lim_{t\rightarrow\log\sigma_i}\int_a^t\rho_i^{-1}(r)\dd{r}-pt=\infty$ for any $a<\log\sigma_i$ and any constant $p>0$, where $\rho_i=\log\theta_i$.
\end{enumerate}

Before continuing with the definitions, we consider the case when $u(t)$ is a constant matrix $A$. We assume that $A$ has nonnegative entries and is irreducible; that is, the underlying reaction graph is strongly connected. We denote the set of such $A$ matrices as $\mathcal{A}$. Then the equilibria of $\dot x=f(x,A)$ can be divided into the sets of boundary equilibria and positive equilibria:
\begin{equation}
	\begin{aligned}
		&E_0=\qty\big{x\in\partial\overline{\mathbb{R}}_+^n\big|f(x,A)=0},\\
		&E_{A,+}=\qty\big{x\in\mathbb{R}_+^n\big|f(x,A)=0}.
	\end{aligned}
\end{equation}
Then, the result \cite[Theorem 2.1]{Chaves2005} (and also \cite[Theorem 2]{Son:2001}) shows that if there are no boundary equilibria in any positive class, then each positive class contains a unique globally (relative to the positive class) asymptotically stable positive equilibrium. Denote the unique positive equilibrium in the same class as $x_0$ as $\overline x(x_0,A)$ and notice that $E_{A,+}=\qty\big{\overline x(x_0,A)\big|x_0\in\mathbb{R}_+^n}$. Finally, denote
\begin{equation}
	\mathcal{E}=\bigcup_{A\in\mathcal{A}}E_{A,+}.
\end{equation}
\begin{definition} We define the following function classes:
	\begin{enumerate}[(i)]
		\item A function $\alpha:\overline{\mathbb{R}}_+\mapsto\overline{\mathbb{R}}_+$ is said to be of class $\mathcal{K}$ if it is continuous, strictly increasing and has $\alpha(0)=0$.
		\item The subset of unbounded functions of class $\mathcal{K}$ are denoted by $\mathcal{K}_{\infty}$.
		\item A function $\beta:\overline{\mathbb{R}}_+\times\overline{\mathbb{R}}_+$ is said to be of class $\mathcal{KL}$ if $\beta(.,t)$ is of class $\mathcal{K}$ for all $t\ge0$ and $\beta(r,.)$ is strictly decreasing to zero for all $r>0$.
	\end{enumerate}
\end{definition}
We consider nonnegative time-varying inputs such that at any time instant the reaction graph is strongly connected; that is, the input-value set $\mathbb{U}$ is a subset of $\mathcal{A}$. Furthermore, let $\norm{.}_2$ denote the spectral norm induced by the Euclidian norm and for $u:\overline{\mathbb{R}}_+\mapsto\mathbb{U}$ define
\begin{equation}
	\norm{u}_{\mathbb{U}}=\essup_{t\in[0,\infty)}\norm{u(t)}_2.
\end{equation}
\begin{definition}
	A system $\dot x=f(x,u)$ is uniformly input-to-state stable (ISS) with input-value set $\mathbb{U}$ if for every compact set $P\subset\mathcal{E}$ and every compact set $F\subset\overline{\mathbb{R}}_+^n$ containing $P$, there exist functions $\beta=\beta_P$ of class $\mathcal{KL}$ and $\phi=\phi_P$ of class $\mathcal{K}_{\infty}$ such that, for every $\overline x_o\in P\cap E_{u_0,+}$ for some $u_0\in\mathbb{U}$ we have that
	\begin{equation}
		\norm{x(t)-\overline x_0}\le\beta\qty\big(\norm{x_0-\overline x_0},t)+\phi\qty\big(\norm{u-u_0}_{\mathbb{U}})
	\end{equation}
	holds for each $u:\overline{\mathbb{R}}_+\mapsto\mathbb{U}$ input and every initial condition $x_0\in F\cap\mathcal{S}_{\overline x_0}$ and for all $t\ge 0$ such that $x(s)\in F$ for $s\in[0,t]$.
\end{definition}
According to the above definition we say that a system is ISS if it is globally asymptotically system in the absence of external inputs and if its trajectories are bounded by an appropriate function of the input. In some sense this definition is intended to capture the idea of "bounded input bounded output" stability, since for bounded $u$ input ($u-u_0$ to be more precise) the trajectories will remain in a ball and, in fact, approach the ball $\phi(\norm{u-u_0}_{\mathbb{U}})$ as $t$ increases \cite{Sontag1989}.\newline

We assume that there exists a uniform lower bound on the parameters; that is, we consider input-value sets of the form
\begin{equation}
	\mathcal{A}\supset\mathbb{U}_{\epsilon}=\qty\big{u\in\mathcal{A}\big|u_{ij}(t)\ge\epsilon~\forall t\ge0\text{, or }u_{ij}(t)=0~\forall t\ge0}.
\end{equation}
We also recall that the input functions are piecewise locally Lipschitz in time with a finite number of discontinuities, thus we introduce
\begin{equation}
	\mathcal{W}=\qty\big{w:\overline{\mathbb{R}}_+\mapsto\mathbb{U}_{\epsilon}\big|w\text{ is piecewise locally Lipschitz}}.
\end{equation}
Then the main Theorem of \cite{Chaves2005} states:
\begin{thm}\label{thm:ISS}
	Consider the system \eqref{eq:dotxf} with and suppose that is is mass-conservative; that is, there exists $v\in\mathbb{R}_+^n$ such that $v^{\mathrm{T}}f(x,u)=0$ for all $x\in\overline{\mathbb{R}}_+^n$ and $u\in\mathcal{A}$. Then the system with input maps $u\in\mathcal{W}$ is uniformly ISS with input-value set $\mathbb{U}_{\epsilon}$.
\end{thm}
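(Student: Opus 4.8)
The plan is to exhibit a uniform ISS-Lyapunov function for \eqref{eq:dotxf} and then to invoke the standard Lyapunov characterization of input-to-state stability (cf. \cite{Sontag1989}); this is exactly the line taken in \cite{Chaves2005}, and we only indicate how the hypotheses of the present statement feed into it. Fixing a reference positive equilibrium $\overline x_0\in E_{u_0,+}$, the candidate is the generalized entropy (pseudo-Helmholtz) function
\begin{equation}
	V(x)=\sum_{i=1}^{n}\int_{\overline x_{0,i}}^{x_i}\log\frac{\theta_i(r)}{\theta_i(\overline x_{0,i})}\,\dd{r},
\end{equation}
which reduces to the Horn--Jackson logarithmic Lyapunov function \cite{Horn1972} when each $\theta_i$ is the identity. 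Hypothesis (c) guarantees that $V$ extends continuously up to $\partial\overline{\mathbb{R}}_+^n$; hypotheses (b) and (d) make $\log\theta_i$ strictly increasing with the correct sign, so that $V\ge 0$ and vanishes only at $\overline x_0$; hypothesis (e) is precisely what restores properness of $V$ in the transformed coordinates $\rho_i=\log\theta_i$ when the range of $\theta_i$ is bounded ($\sigma_i<\infty$, e.g. for saturating kinetics). Combined with mass-conservativity, which confines every trajectory to the compact stoichiometric compatibility class $\mathcal{S}_{\overline x_0}$, this yields on each compact $P\subset\mathcal{E}$ the sandwich $\underline\nu_P(\norm{x-\overline x_0})\le V(x)\le\overline\nu_P(\norm{x-\overline x_0})$ with $\underline\nu_P,\overline\nu_P\in\mathcal{K}_\infty$.

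Next I would establish the dissipation inequality in the constant-input case. For an irreducible $A\in\mathcal{A}$ the zero-deficiency structure of the reaction graph (strongly connected, with one linkage class per complex pair, in the spirit of the reduction behind Theorem~\ref{thm:deficiency}), together with the Horn--Jackson argument adapted to the transformations $\theta_i$ (cf. \cite{Feinberg2019}), gives $\nabla V(x)\,f(x,A)\le 0$ on $\mathbb{R}_+^n$, with equality only at the unique positive equilibrium $\overline x(x,A)$ of that class; the hypothesis that no positive class contains a boundary equilibrium — verified in the compartmental application by the persistence analysis of Section~\ref{sub:per} — makes this decay strict away from $\overline x_0$. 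Restricting to the compact set $\mathcal{S}_{\overline x_0}\cap F$ and using continuity together with the uniform lower bound $\epsilon$ on the nonzero rates (the defining feature of $\mathbb{U}_\epsilon$), the decay can be bounded below by a single $\alpha_P\in\mathcal{K}_\infty$ of $\norm{x-\overline x_0}$, uniformly over $A\in\mathbb{U}_\epsilon$.

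For a time-varying $u\in\mathcal{W}$ I would split $f(x,u)=f(x,u_0)+(f(x,u)-f(x,u_0))$. The first summand contributes at most $-\alpha_P(\norm{x-\overline x_0})$ by the previous step. The second summand is linear in $u-u_0$ with coefficients that are products of the transformed coordinates $\theta_l(x_l)$ times bounded reaction vectors, hence on the compact class it is dominated by a constant times $\norm{u-u_0}_{\mathbb{U}}$; a Young-type inequality then absorbs the cross term, yielding $\nabla V(x)\,f(x,u)\le-\tfrac12\alpha_P(\norm{x-\overline x_0})+\gamma_P(\norm{u-u_0}_{\mathbb{U}})$ with $\gamma_P\in\mathcal{K}_\infty$. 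Thus $V$ is a uniform ISS-Lyapunov function for the input-value set $\mathbb{U}_\epsilon$, and the claimed $\mathcal{KL}+\mathcal{K}_\infty$ estimate follows in the usual way.

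The main obstacle is the constant-input inequality $\nabla V(x)\,f(x,A)\le 0$ together with its strictness and, above all, its uniformity over $\mathbb{U}_\epsilon$: it rests on the deficiency-zero machinery for \emph{non}-mass-action kinetics and on excluding boundary equilibria in positive classes, and — crucially for ISS as opposed to mere asymptotic stability — on the uniform lower bound $\epsilon$ on the active rates, without which the family of equilibria $\mathcal{E}$ could accumulate on $\partial\overline{\mathbb{R}}_+^n$ and the class-$\mathcal{K}$ lower bound $\alpha_P$ would degenerate. The saturating case $\sigma_i<\infty$ is the other delicate point, and is handled entirely by hypothesis (e).
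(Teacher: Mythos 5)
First, a point of reference: the paper does not prove Theorem \ref{thm:ISS} at all --- it is quoted as the main theorem of \cite{Chaves2005}, and the text only records that the proof there rests on the ISS-Lyapunov candidate \eqref{eq:lyap}. Your sketch follows exactly that strategy, so in outline you agree with the cited proof: the sandwich bounds drawn from hypotheses (b)--(e), and the constant-input dissipation $\nabla V(x)f(x,A)\le 0$ with strictness tied to the absence of boundary equilibria in positive classes, are a fair summary of the machinery of \cite{Chaves2005}.

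The genuine gap is in your final step. You bound $\nabla V(x)\qty\big(f(x,u)-f(x,u_0))$ by a constant multiple of $\norm{u-u_0}_{\mathbb{U}}$ ``on the compact class'' and then absorb it with a Young-type inequality. But the components of $\nabla V$ are $\log\theta_i(x_i)-\log\theta_i(\overline x_{0,i})$, which are unbounded as $x_i\to 0$, and the sets $F\cap\mathcal{S}_{\overline x_0}\cap\mathbb{R}_+^n$ on which the ISS-Lyapunov inequality must hold come arbitrarily close to $\partial\overline{\mathbb{R}}_+^n$. For any reaction whose product complex contains a species absent from its source complex (the generic situation here: $N_i+S_j\rightarrow N_j+S_i$, or $N_i\rightarrow N_j$ after factorization), the corresponding term $\qty\big(u_{ij}-u_{0,ij})\,\theta^{y_j}(x)\,\langle y_i-y_j,\rho(x)-\rho(\overline x_0)\rangle$ blows up as that species tends to zero, with a sign determined by the sign of $u_{ij}-u_{0,ij}$; so no bound of the form $C\norm{u-u_0}_{\mathbb{U}}$ holds, and Young's inequality cannot absorb an unbounded quantity into $-\alpha_P(\norm{x-\overline x_0})$, which is bounded on the class. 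The actual argument must use that the \emph{total} coefficient of each active reaction satisfies $u_{ij}\ge\epsilon$, so the same logarithmic term enters the complete sum with a guaranteed negative contribution that dominates near the boundary --- i.e.\ the restriction to $\mathbb{U}_{\epsilon}$ (and condition (e) when $\sigma_i<\infty$) enters the dissipation estimate itself, not only through nondegeneracy of $\mathcal{E}$ as you suggest. Carrying out these boundary estimates is the bulk of the proof in \cite{Chaves2005}; your sketch currently replaces it with an estimate that is false as stated.
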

The proof relies on the candidate ISS-Lyapunov function (for the definition of which and for the exact connection with ISS stability we refer to \cite{Chaves2005})
\begin{equation}\label{eq:lyap}
	V(x,\overline x)=\sum_{i=1}^n\int_{\overline x_i}^{x_i}\qty\big(\log\theta_i(r)-\log\theta_i(\overline x_i))\dd{r}
\end{equation}
which, for mass-action systems, yields the classical entropy-like Lyapunov function well-known from the theory of chemical reaction networks, see \eqref{eq:LTV_lyap}. We note that $V(x,\overline x)$ is uniquely determined by the $\theta_i$ functions and does not depend explicitly on the reaction/compartmental structure or the time-varying $u_{ij}(t)$ functions; that is, it is universal in the sense of \cite{Gorban2019}.
\begin{rem}
	We note that the assumption that the compartmental graph (and thus the reaction graph of the factored model) is strongly connected is purely technical. For time-invariant systems it simply ensures that the unique equilibrium on each level set of the first integral is positive (except for the trivial case of an empty network of course). In fact, in some cases the initial values of the network can ensure the positivity of the equilibrium even for not strongly connected systems (see Example \hyperref[ex:NSR]{2} and \cite{Vaghy2022} for more details), in which case the above Lyapunov function can be applied.
\end{rem}

\subsection{Factorization approach}\label{sec:factor}
Let us consider a generalized ribosome flow in the reduced state space of the form \eqref{eq:n}, in this case given by
\begin{equation}\label{eq:ni_start}
	\begin{aligned}
		\dot n_i&=\sum_{j\in\mathcal{D}_i}\mathcal{K}_{ji}(n,c-n,t)-\sum_{j\in\mathcal{R}_i}\mathcal{K}_{ij}(n,c-n,t)\\
		&=\sum_{j\in\mathcal{D}_i}k_{ji}(t)\frac{\theta_j(n_j)\nu_i(c_i-n_i)}{1+\Psi_{ji}(n,c^{(m)}-n)}-\sum_{j\in\mathcal{R}_i}k_{ij}(t)\frac{\theta_i(n_i)\nu_j(c_j-n_j)}{1+\Psi_{ij}(n,c^{(m)}-n)}.
	\end{aligned}
\end{equation}

Notice that we can naturally factor some terms of the transition rates into the time-varying coefficient as
\begin{equation}
	k_{ij}(t)\frac{\theta_i(n_i)\nu_j(c_j-n_j)}{1+\Psi_{ij}(n,c^{(m)}-n)}=\frac{k_{ij}(t)\nu_j(c_j-n_j)}{1+\Psi_{ij}(n,c^{(m)}-c)}\theta_i(n_i)=:\tilde k_{ij}(t)\theta_i(n_i).
\end{equation}
Then \eqref{eq:ni_start} can be rewritten as
\begin{equation}\label{eq:ni_factored}
	\dot n_i=\sum_{j\in\mathcal{D}_i}\tilde k_{ji}(t)\theta_j(n_j)-\sum_{j\in\mathcal{R}_i}\tilde k_{ij}(t)\theta_i(n_i).
\end{equation}

This equation can be clearly embedded into the class of strongly connected systems of the form \eqref{eq:dotxf}, since the reaction graph of \eqref{eq:ni_factored} consists of species $\Sigma=\qty{N_1,N_2,\dots,N_m}$, has the $m\times m$ identity matrix as its stoichiometric matrix and for each transition $(q_i,q_j)\in A$ we assign a reaction of the form
\begin{equation}
	N_i\xrightarrow{\tilde{\mathcal{K}}_{ij}(t)}N_j,
\end{equation}
and thus the system of differential equations can be written as
\begin{equation}\label{eq:n_factored}
	\dot n=I\tilde A_k(t)\theta(n)
\end{equation}
where the elements of $\tilde A_k$ are given by
\begin{equation}
	\qty\big[\tilde A_k(t)]_{ij}=\begin{cases}-\sum_{l\in\mathcal{R}_i}\tilde k_{il}(t)\quad&\text{if }i=j,\\\tilde k_{ji}(t)\quad&\text{if }j\in\mathcal{D}_i,\\0\quad&\text{otherwise}.\end{cases}
\end{equation}

Note that the fractions $\frac{\nu_j(c_j-n_j)}{1+\Psi_{ij}(n,c^{(m)}-n)}$ are differentiable (and thus Lipschitz) and each $k_{ij}(t)$ is piecewise locally Lipschitz, hence each $\tilde k_{ij}(t)$ is piecewise locally Lipschitz. This shows that generalized ribosome flows can be embedded into the class of rate-controlled biochemical networks of \cite{Chaves2005} in a way that preserves the compartmental structure; that is, the reaction graph of \eqref{eq:n_factored} is topologically isomorph to the compartmental graph. In particular if the compartmental model is strongly connected, then the reaction graph of the reduced system \eqref{eq:n_factored} is strongly connected as well. Furthermore, combining the persistence of the system with Remark \ref{rem:eps} we find that $\tilde A_k\in\mathcal{W}$, and thus Theorem \ref{thm:ISS} ensures input-to-state stability.

\subsection{Quasi-LTV factorization}\label{sec:LTV}
A classical argument shows that the model reduction above can result in a Linear Time-Varying (LTV) system \cite{Jacquez1993}. Consider an $F(x)\in\mathcal{C}^k(\mathbb{R})$ nonnegative function such that $F(0)=0$, where $k\ge1$. Then for the function $F(rx)$ we have
\begin{equation}
	\dv{F(rx)}{r}=xF'(rx)
\end{equation}
and thus
\begin{equation}
	F(x)-F(0)=x\int_0^1F'(rx)\dd{r}=xf(x)
\end{equation}
and since $F(0)=0$, we find that $F(x)=xf(x)$. Note, that the calculation also shows that $f\in\mathcal{C}^{k-1}(\mathbb{R})$. Since $\theta_i$ is real analytic we have that $\theta_i(n_i)=\hat\theta_i(n_i)n_i$ for some $\hat\theta_i$ real analytic function. Then \eqref{eq:ni_factored} can be rewritten as
\begin{equation}\label{eq:ni_linear}
	\dot n_i=\sum_{j\in\mathcal{D}_i}\hat k_{ji}(t)n_j-\sum_{j\in\mathcal{R}_i}\hat k_{ij}(t)n_i
\end{equation}
where
\begin{equation}
	\hat k_{ij}(t)=\frac{k_{ij}(t)\hat\theta_i(n_i)\nu_j(c_j-n_j)}{1+\Psi_{ij}(n,c^{(m)}-n)}.
\end{equation}
Similarly as before, the reaction graph of \eqref{eq:ni_linear} consists of species $\Sigma=\qty{N_1,N_2,\dots,N_m}$, has the $m\times m$ identity matrix as its stoichiometric matrix and for each transition $(q_i,q_j)\in A$ we assign a reaction of the form
\begin{equation}
	N_i\xrightarrow{\hat{\mathcal{K}}_{ij}(t)}N_j,
\end{equation}
and thus the system of differential equations can be written as
\begin{equation}\label{eq:n_linear}
	\dot n=I\hat A_k(t)n
\end{equation}
where the elements of $\hat A_k$ are given by
\begin{equation}
	\qty\big[\hat A_k(t)]_{ij}=\begin{cases}-\sum_{l\in\mathcal{R}_i}\hat k_{il}(t)\quad&\text{if }i=j,\\\hat k_{ji}(t)\quad&\text{if }j\in\mathcal{D}_i,\\0\quad&\text{otherwise}.\end{cases}
\end{equation}
Again, each $\hat k_{ij}(t)$ is piecewise locally Lipschitz, thus for strongly connected compartmental models Theorem \ref{thm:ISS} ensures input-to-state stability via Remark \ref{rem:eps}.

\subsection{Factorization of Monod kinetics}
Let us consider the triangular model in Figure \ref{fig:comp} with rational kinetics corresponding to Monod kinetics of the form
\begin{equation}\label{eq:ex:orig}
	\begin{aligned}
		\dot n_1&=k_{31}(t)\frac{n_3}{l+n_3}\frac{c_1-n_1}{l+c_1-n_1}-k_{12}(t)\frac{n_1}{l+n_1}\frac{c_2-n_2}{l+c_2-n_2}\\
		\dot n_2&=k_{12}(t)\frac{n_1}{l+n_1}\frac{c_2-n_2}{l+c_2-n_2}-k_{23}(t)\frac{n_2}{l+n_2}\frac{c_3-n_3}{l+c_3-n_3}\\
		\dot n_3&=k_{23}(t)\frac{n_2}{l+n_2}\frac{c_3-n_3}{l+c_3-n_3}-k_{31}(t)\frac{n_3}{l+n_3}\frac{c_1-n_1}{l+c_1-n_1}
	\end{aligned}
\end{equation}
for some $l>0$. As discussed before, the corresponding CRN is not strongly connected. However, using the functions
\begin{equation}
		\tilde k_{31}(t)=k_{31}(t)\frac{c_1-n_1}{l+c_1-n_1}\qquad\tilde k_{12}(t)=k_{12}(t)\frac{c_2-n_2}{l+c_2-n_2}\qquad\tilde k_{23}(t)=k_{23}(t)\frac{c_3-n_3}{l+c_3-n_3}
\end{equation}
we can to rewrite \eqref{eq:ex:orig} as
\begin{equation}\label{eq:ex:fact1}
	\begin{aligned}
		\dot n_1&=\tilde k_{31}(t)\frac{n_3}{l+n_3}-\tilde k_{12}(t)\frac{n_1}{l+n_1}\\
		\dot n_2&=\tilde k_{12}(t)\frac{n_1}{l+n_1}-\tilde k_{23}(t)\frac{n_2}{l+n_2}\\
		\dot n_3&=\tilde k_{23}(t)\frac{n_2}{l+n_2}-\tilde k_{31}(t)\frac{n_3}{l+n_3}.
	\end{aligned}
\end{equation}
Then the CRN corresponding to \eqref{eq:ex:fact1} has the following species and reactions:
\begin{equation}
	\begin{aligned}
		&\Sigma=\qty{N_1,N_2,N_3}\\
		&R_1:N_1\xrightarrow{\tilde k_{12}}N_2\\
		&R_2:N_2\xrightarrow{\tilde k_{23}}N_3\\
		&R_3:N_3\xrightarrow{\tilde k_{31}}N_1.
	\end{aligned}
\end{equation}
which is strongly connected and isomorph to the compartmental model in Figure \ref{fig:comp}. We arrive at the same conclusion if we instead use the functions
\begin{equation}
	\begin{aligned}
		\hat k_{31}(t)&=k_{31}(t)\frac{1}{l+n_3}\frac{c_1-n_1}{l+c_1-n_1}\\
		\hat k_{12}(t)&=k_{12}(t)\frac{1}{l+n_1}\frac{c_2-n_2}{l+c_2-n_2}\\
		\hat k_{23}(t)&=k_{23}(t)\frac{1}{l+n_2}\frac{c_3-n_3}{l+c_3-n_3}
	\end{aligned}
\end{equation}
to rewrite \eqref{eq:ex:orig} as
\begin{equation}\label{eq:ex:fact1}
	\begin{aligned}
		\dot n_1&=\hat k_{31}(t)n_3-\hat k_{12}(t)n_1\\
		\dot n_2&=\hat k_{12}(t)n_1-\hat k_{23}(t)n_2\\
		\dot n_3&=\hat k_{23}(t)n_2-\hat k_{31}(t)n_3.
	\end{aligned}
\end{equation}
Note that the quasi-LTV factorization might be more complicated in some cases, but the investigation in Section \ref{sec:LTV} guarantees its existence. 

\subsection{Induced family of Lyapunov functions}
The above investigation demonstrates that generalized ribosome flows can be embedded into rate-controlled biochemical networks in at least two different ways, where each embedding induces a different Lyapunov function of the form \eqref{eq:lyap}. Thus, in general, we may use at least two different Lyapunov functions governing the same dynamics. To characterize their exact connection, consider a factored system of the form \eqref{eq:n_factored} with its ISS-Lyapunov function $V(n,\overline n)$. The quasi-LTV representation of the system admits an ISS-Lyapunov function of the form
\begin{equation}\label{eq:LTV_lyap}
	V^{LTV}(n,\overline n)=\sum_{i=1}^m\int_{\overline n_i}^{n_i}\qty\big(\log r-\log\overline n_i)\dd{r}=\sum_{i=1}^m\qty\bigg(n_i\log\frac{n_i}{\overline n_i}+\overline n_i-n_i)=:\sum_{i=1}^mV_i^{LTV}(n_i,\overline n_i)
\end{equation}
so that we can write
\begin{equation}
	\begin{aligned}
		V(n,\overline n)&=\sum_{i=1}^m\int_{\overline n_i}^{n_i}\qty\Big(\log\qty\big(\hat\theta_i(r)r)-\log\qty\big(\hat\theta(\overline n_i)\overline n_i))\dd{r}=\sum_{i=1}^m\int_{\overline n_i}^{n_i}\qty\big(\log\hat\theta_i(r)-\log\hat\theta_i(\overline n_i))\dd{r}\\
		&+\sum_{i=1}^m\int_{\overline n_i}^{n_i}\qty\big(\log r-\log\overline n_i)\dd{r}=\sum_{i=1}^m\int_{\overline n_i}^{n_i}\qty\big(\log\hat\theta_i(r)-\log\hat\theta_i(\overline n_i))\dd{r}+V^{LTV}(n,\overline n).
	\end{aligned}
\end{equation}
\smallskip

While in general we are restricted to the above factorizations, in some special cases we may use a whole family of factorizations and corresponding Lyapunov functions. To illustrate this, consider an example when each $\theta_i(r)=\frac{r^{a_i}}{(l+r)^{b_i}}$ for some $l>0$ and $a_i\in\mathbb{N}$, $b_i\in\mathbb{N}_0$, $a_i\ge b_i$ (these properties ensure that the functions $\theta_i$ are nondecreasing). Then, after the factorization described in Section \ref{sec:factor}, the Lyapunov function \eqref{eq:lyap} becomes
\begin{equation}\label{eq:lyap_kab}
	V^{(l,a,b)}(n,\overline n)=\sum_{i=1}^m\qty\bigg((a_i-b_i)(\overline n_i-n_i)+a_in_i\log\frac{n_i}{\overline n_i}+b_i(l+n_i)\log\frac{l+\overline n_i}{l+n_i}).
\end{equation}
We emphasize that \eqref{eq:lyap} only depends on the $\theta_i$ functions, in this case parametrized with the $l,a_i,b_i$ values; that is, it is independent of the network structure and transition rate coefficients. We can also perform the factorization $\theta_i(r)=\tilde\theta_i(r)\frac{r^{\hat a_i}}{(l+r)^{\hat b_i}}$ with $\hat a_i\in\mathbb{N}$, $\hat a_i<a_i$, $\hat b_i\in\mathbb{N}_0$, $\hat a_i\ge\hat b_i$ yielding the Lyapunov function $V^{(l,\hat a,\hat b)}$ of the same form as in \eqref{eq:lyap_kab}. This shows that the parameters $a$ and $b$ can be freely (apart from the constraints above) chosen in \eqref{eq:lyap_kab}. We may also observe some interesting behaviour at the extrema of the parameters $\hat b$ and $l$, namely, that if we choose each $\hat b_i=0$ then the Lyapunov function in \eqref{eq:lyap_kab} is independent of $l$; that is, we have that
\begin{equation}
	V^{(l,\hat a,0)}(n,\overline n)=\sum_{i=1}^m\hat a_iV_i^{LTV}(n_i,\overline n_i).
\end{equation}
Moreover, letting $l\rightarrow\infty$ yields the convergence
\begin{equation}\label{eq:Vconv}
	\lim_{l\rightarrow\infty}V^{(l,\hat a,\hat b)}(n,\overline n)=\sum_{i=1}^m\hat a_iV_i^{LTV}(n_i,\overline n_i)
\end{equation}
where $V_i^{LTV}$ is defined in \eqref{eq:LTV_lyap}.

\subsubsection*{Example 1.5}
Let us again consider a time-invariant version triangular compartmental model in the reduced state space from Figure \ref{fig:comp}. For a given initial condition $n_0$ we can substitute $n_3=H(n_0)-n_1-n_2$, and thus the Lyapunov function restricted to the manifold $\qty\big{H(n)=H(n_0)}$ can be seen as a two dimensional function with local coordinates $n_1$ and $n_2$.

We set the capacities as $c_1=c_2=c_3=100$ and $k_{12}=100$, $k_{23}=60$, $k_{31}=20$. The system has transition rates as described above with each $a_i=b_i=3$; that is, we have that
\begin{equation}
	\begin{aligned}
		\mathcal{K}_{12}(n_1,c_2-n_2)&=100\cdot\frac{n_1^3}{(l+n_1)^3}\cdot\frac{(c_2-n_2)^3}{(l+c_2-n_2)^3}\\
		\mathcal{K}_{23}(n_2,c_3-n_3)&=60\cdot\frac{n_2^3}{(l+n_2)^3}\cdot\frac{(c_3-n_3)^3}{(l+c_3-n_3)^3}\\
		\mathcal{K}_{31}(n_3,c_1-n_1)&=20\cdot\frac{n_3^3}{(l+n_3)^3}\cdot\frac{(c_1-n_1)^3}{(l+c_1-n_1)^3}.
	\end{aligned}
\end{equation}

\begin{figure}[H]
	\begin{center}
		\begin{subfigure}[b]{0.32\textwidth}
			\centering
			\includegraphics[width=\textwidth]{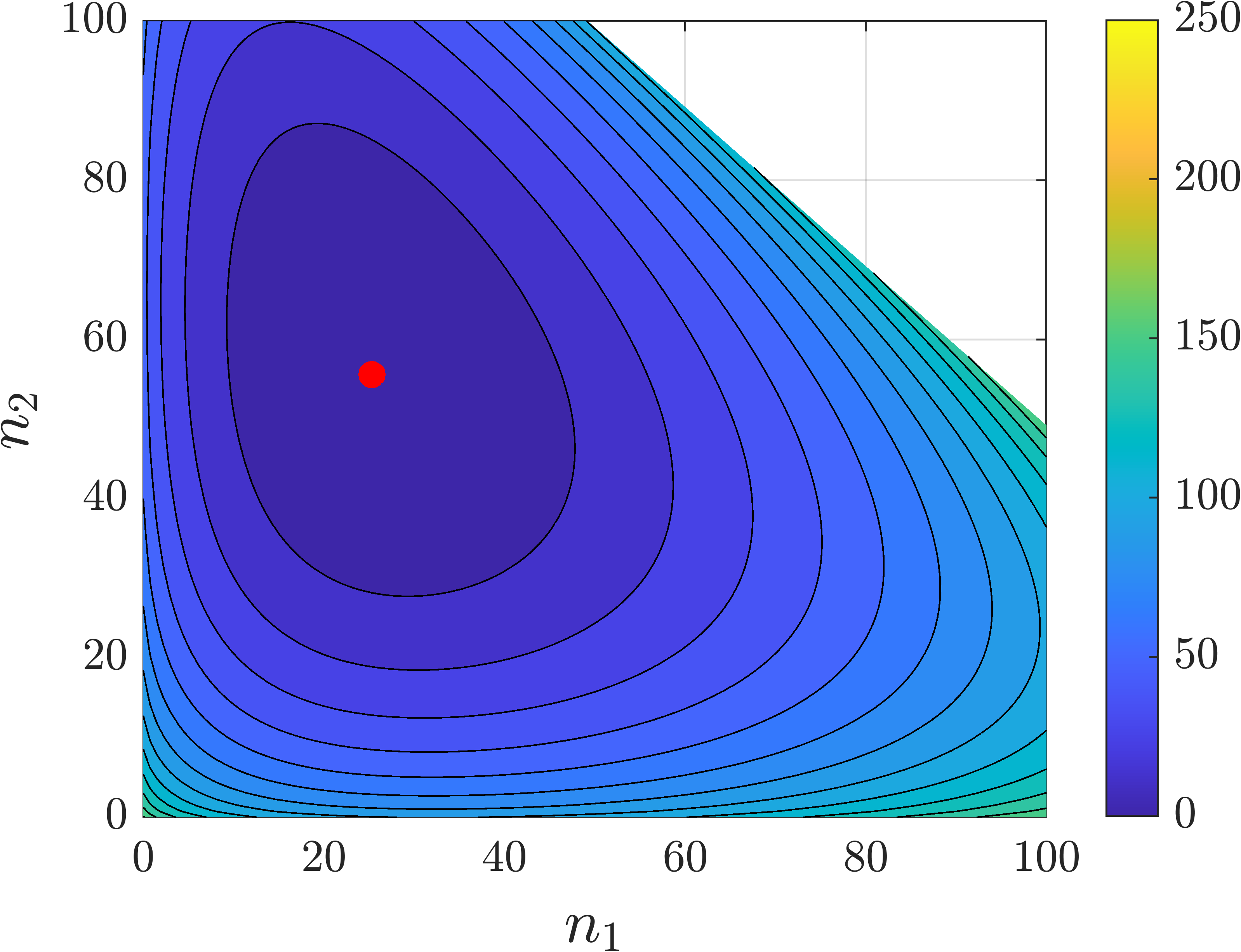}
			\caption{\scriptsize$l=25$, $\hat a=[3~3~3]$, $\hat b=[3~3~3]$}\label{fig:lyap:a}
		\end{subfigure}
		\hfill
		\begin{subfigure}[b]{0.32\textwidth}
			\centering
			\includegraphics[width=\textwidth]{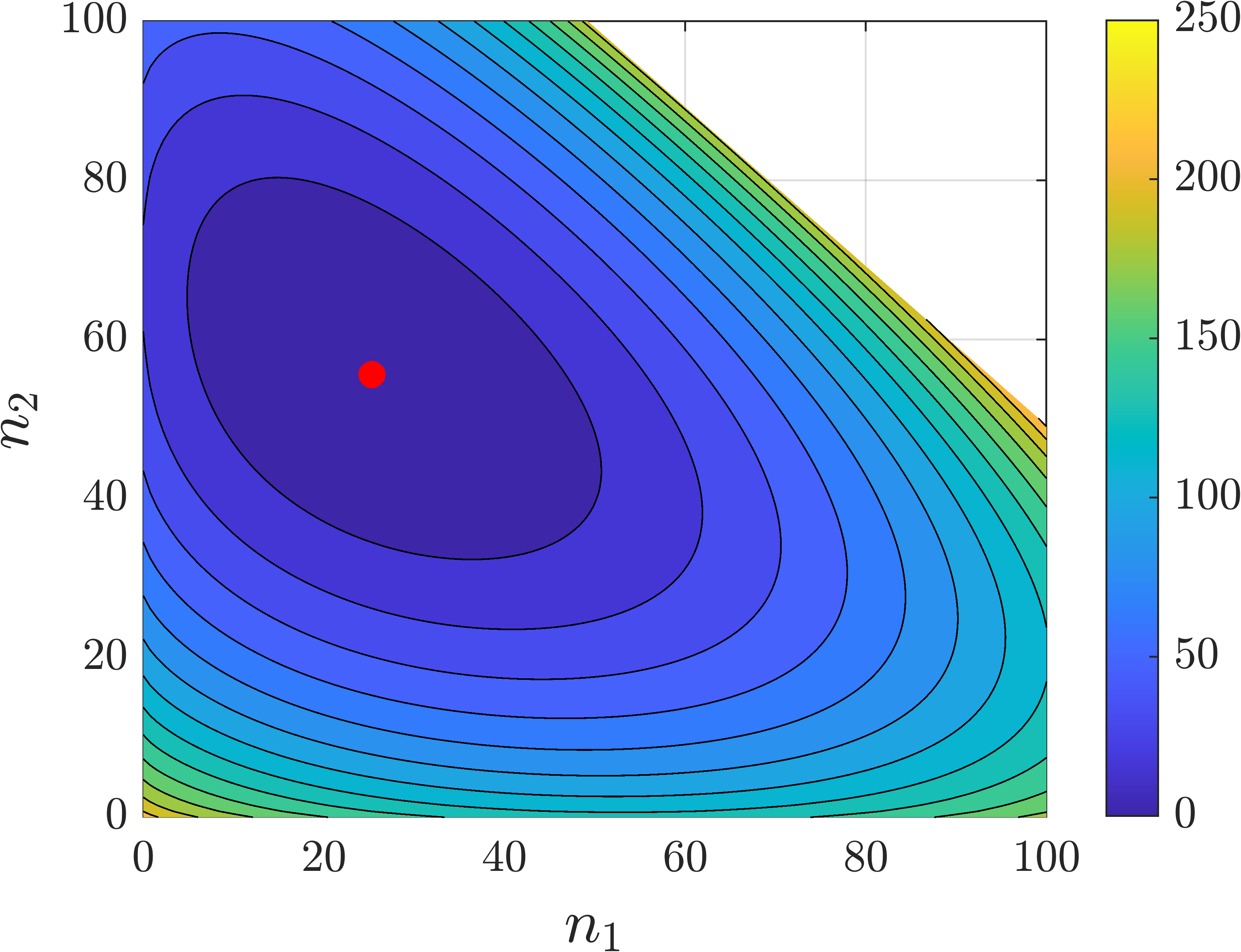}
			\caption{\scriptsize$l=25$, $\hat a=[1~2~3]$, $\hat b=[0~0~1]$}
		\end{subfigure}
		\hfill
		\begin{subfigure}[b]{0.32\textwidth}
			\centering
			\includegraphics[width=\textwidth]{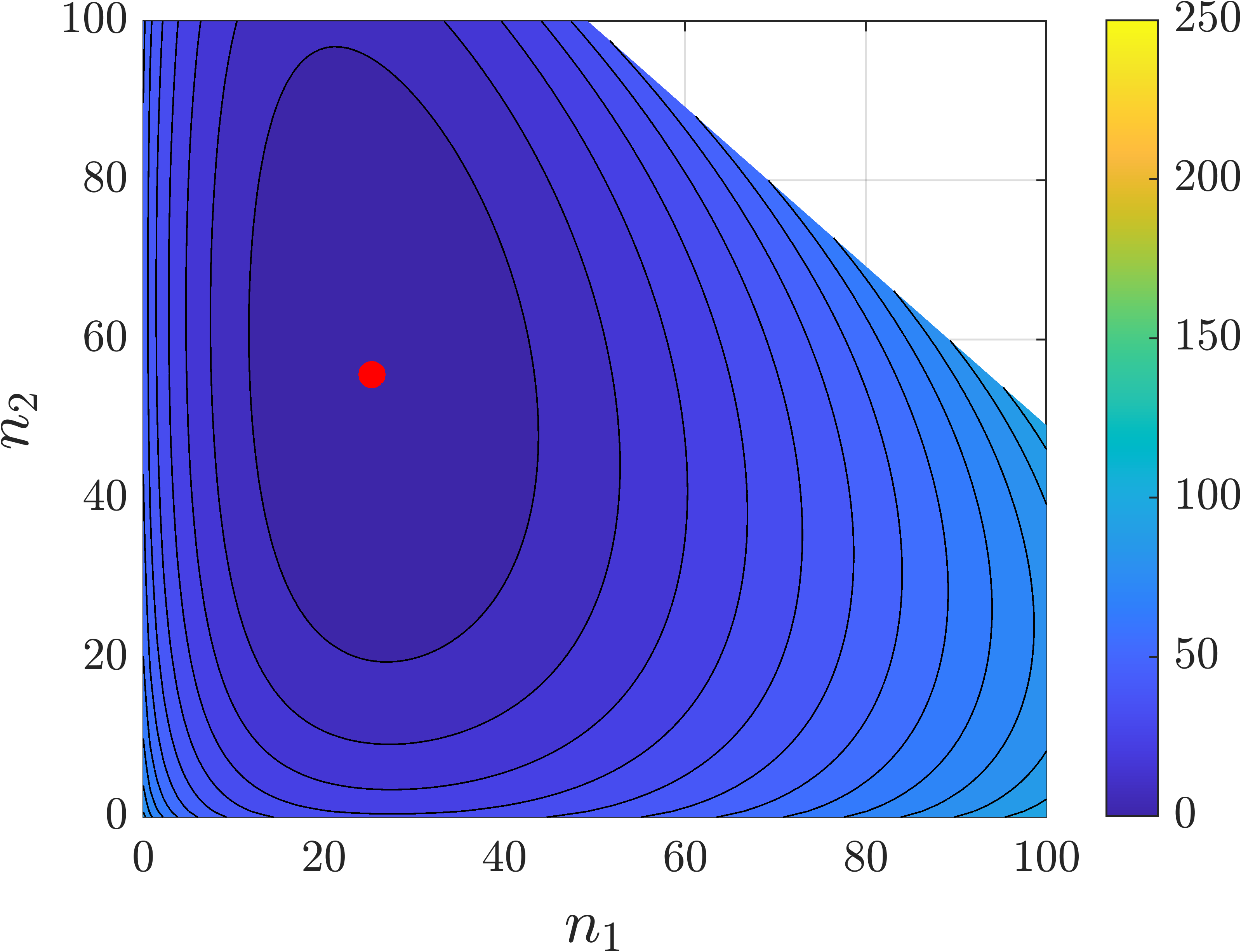}
			\caption{\scriptsize$l=25$, $\hat a=[3~1~1]$, $\hat b=[3~1~1]$}\label{fig:lyap:c}
		\end{subfigure}

		\begin{subfigure}[b]{0.32\textwidth}
			\centering
			\includegraphics[width=\textwidth]{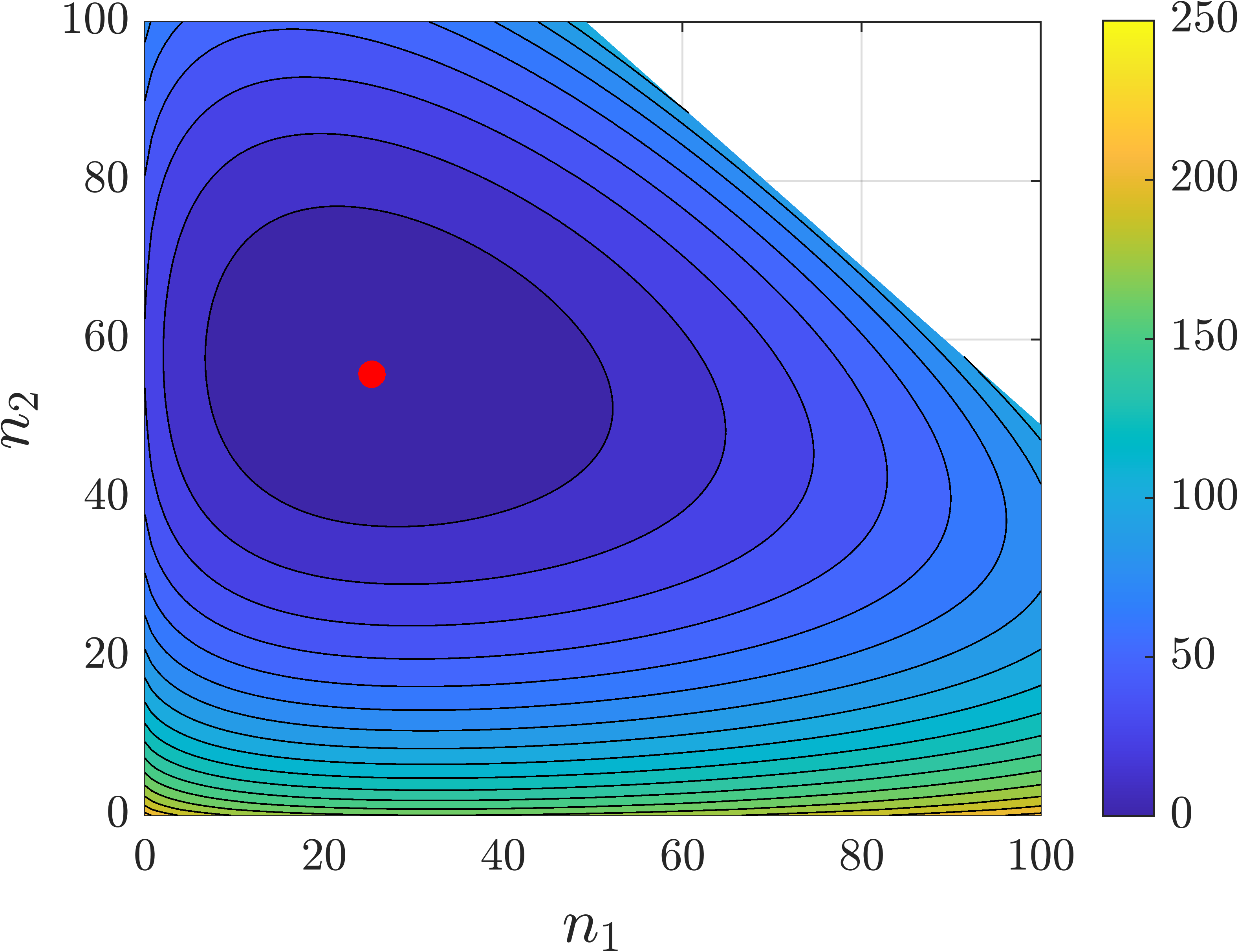}
			\caption{\scriptsize$l=25$, $\hat a=[2~3~2]$, $\hat b=[2~0~2]$}\label{fig:lyap:d}
		\end{subfigure}
		\hfill
		\begin{subfigure}[b]{0.32\textwidth}
			\centering
			\includegraphics[width=\textwidth]{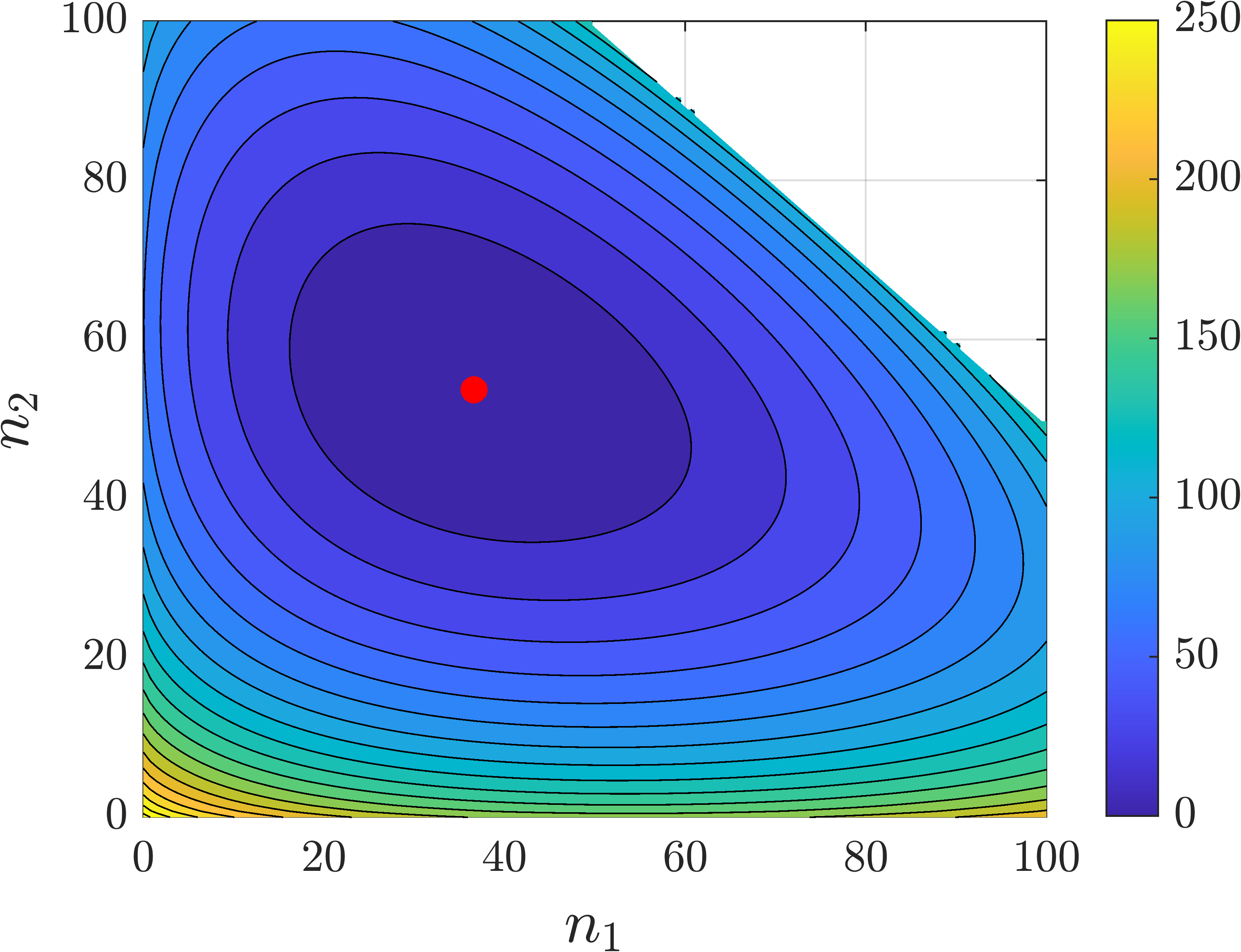}
			\caption{\scriptsize$l=100$, $\hat a=[2~3~2]$, $\hat b=[2~0~2]$}
		\end{subfigure}
		\hfill
		\begin{subfigure}[b]{0.32\textwidth}
			\centering
			\includegraphics[width=\textwidth]{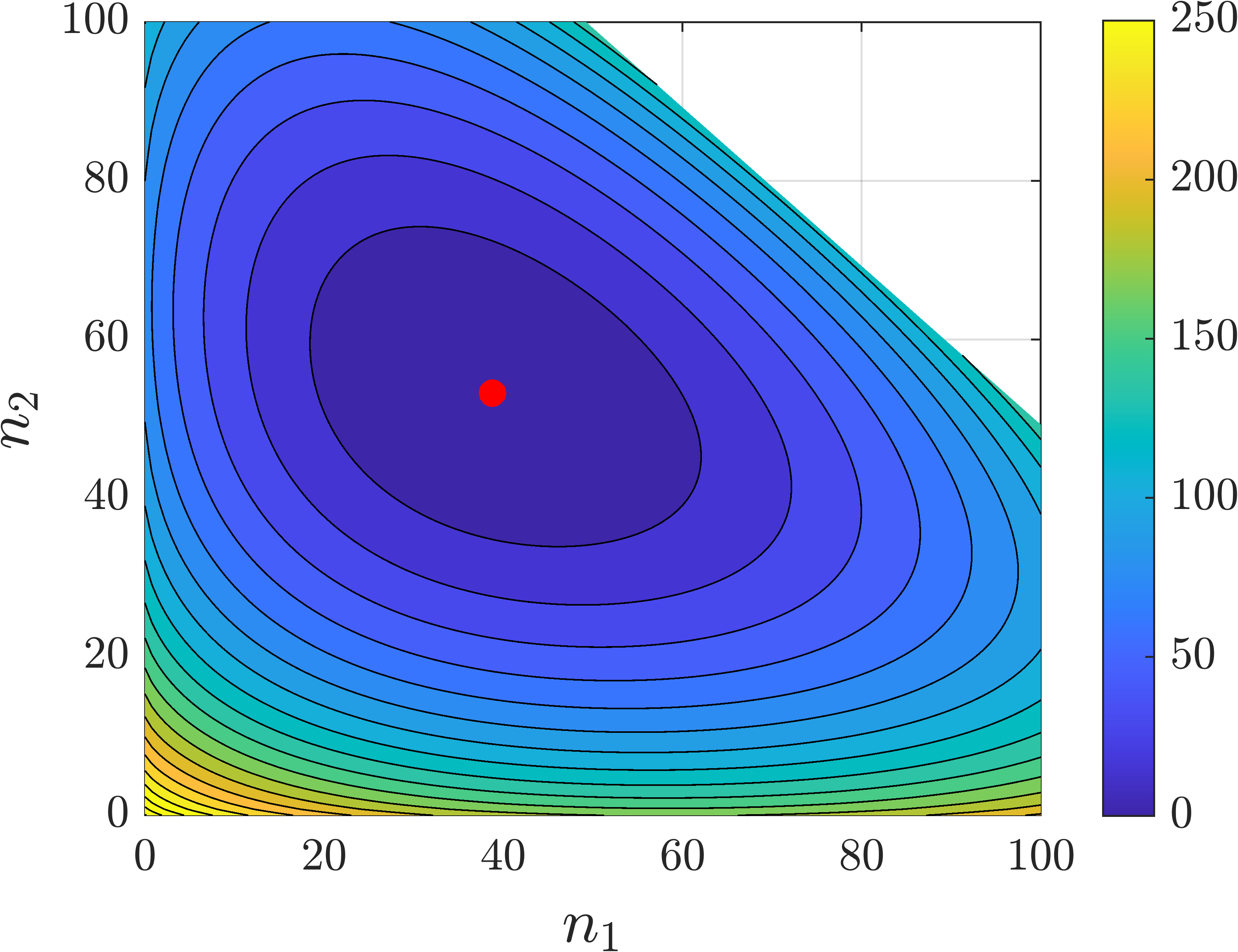}
			\caption{\scriptsize$l=200$, $\hat a=[2~3~2]$, $\hat b=[2~0~2]$}\label{fig:lyap:f}
		\end{subfigure}

		\begin{subfigure}[b]{0.32\textwidth}
			\centering
			\includegraphics[width=\textwidth]{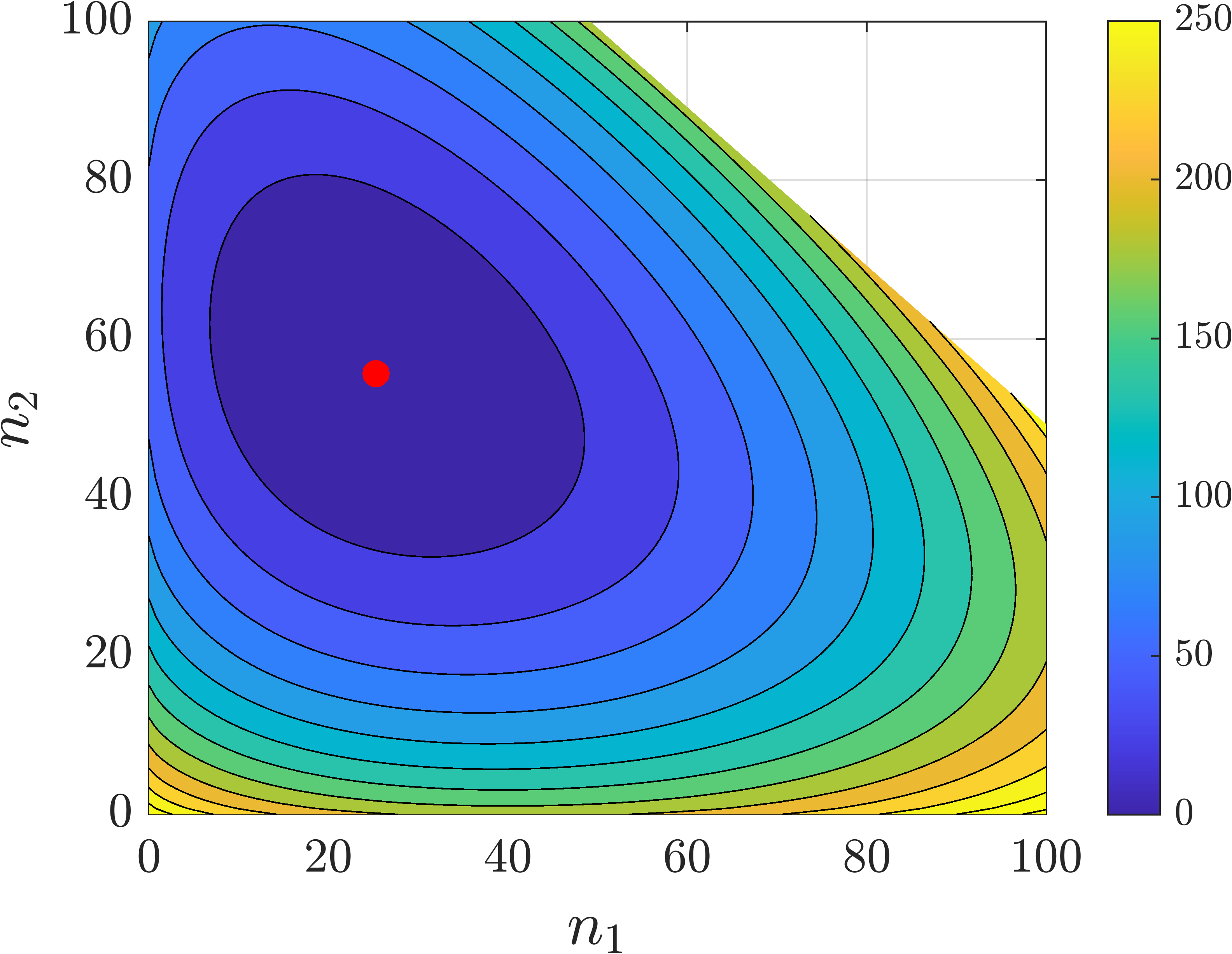}
			\caption{\scriptsize$l=25$, $\hat a=[2~3~2]$, $\hat b=[2~0~2]$}\label{fig:lyap:g}
		\end{subfigure}
		\hfill
		\begin{subfigure}[b]{0.32\textwidth}
			\centering
			\includegraphics[width=\textwidth]{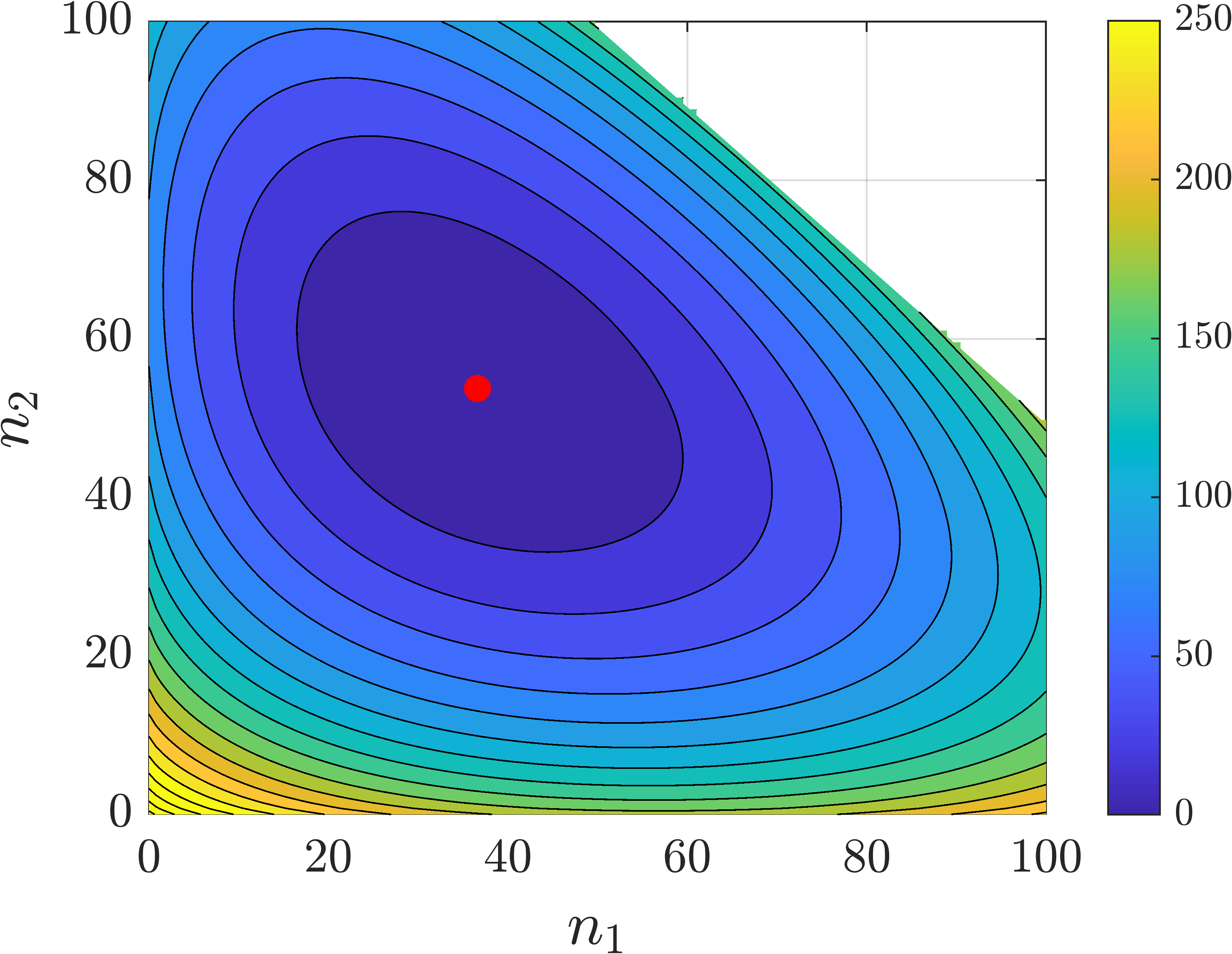}
			\caption{\scriptsize$l=100$, $\hat a=[2~3~2]$, $\hat b=[2~0~2]$}
		\end{subfigure}
		\hfill
		\begin{subfigure}[b]{0.32\textwidth}
			\centering
			\includegraphics[width=\textwidth]{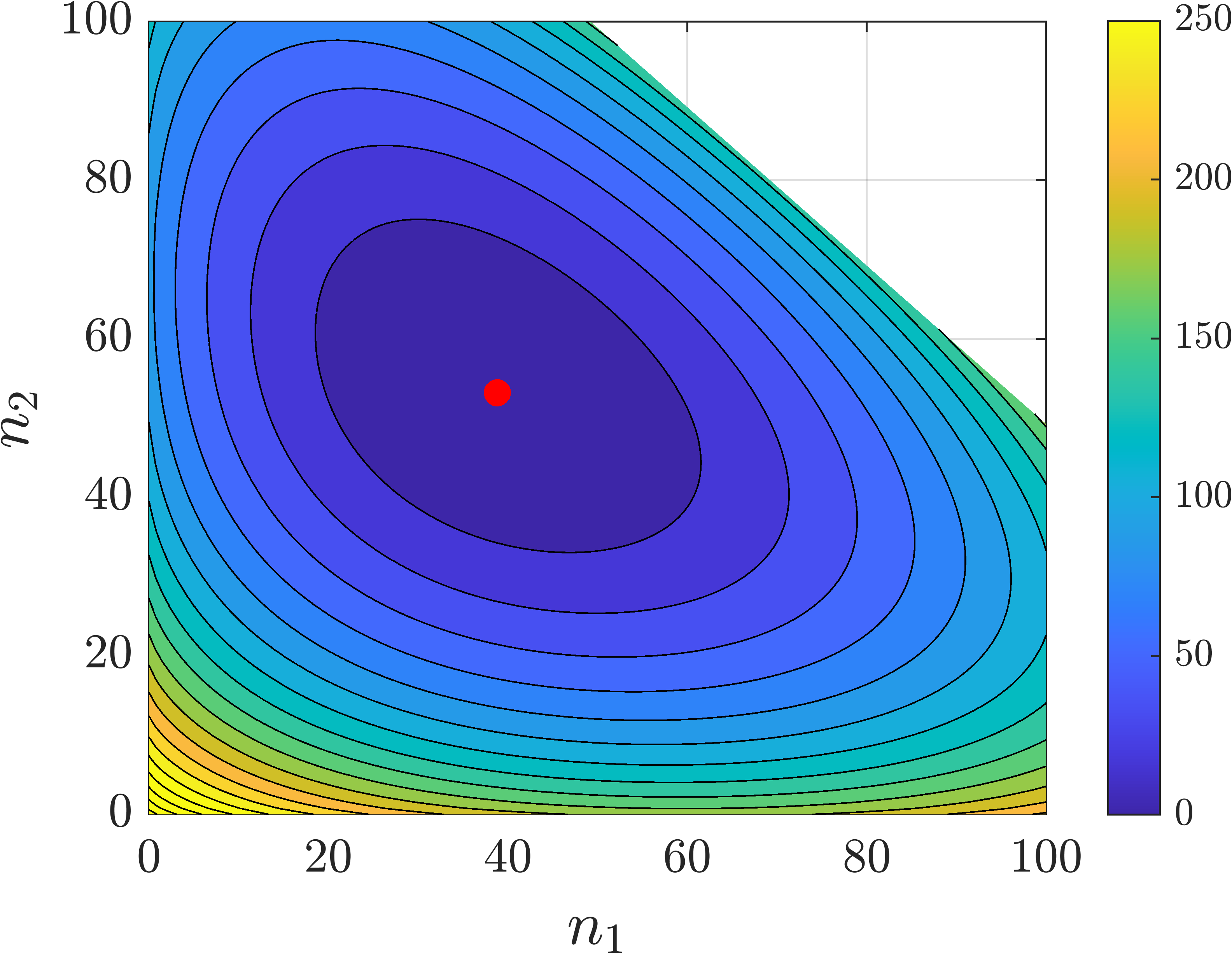}
			\caption{\scriptsize$l=200$, $\hat a=[2,3,2]$, $\hat b=[2,0,2]$}\label{fig:lyap:i}
		\end{subfigure}
		\caption{Comparison of Lyapunov functions.}\label{fig:lyapunovs}
	\end{center}
\end{figure}
The simulations were performed with $H(n_0)=150$. Figures \ref{fig:lyap:a}-\ref{fig:lyap:c} show the Lyapunov function $V^{(l,\hat a,\hat b)}$ for various choices of $\hat a$ and $\hat b$ with $l=25$ fixed. The second and third rows demonstrate the convergence characterized in \eqref{eq:Vconv}; figures \ref{fig:lyap:d}-\ref{fig:lyap:f} show $V^{(l,\hat a,\hat b)}$ for increasing $l$ values \ref{fig:lyap:g}-\ref{fig:lyap:i} shows $\sum_{i=1}^m\hat a_iV_i^{LTV}$ for the same increasing $l$ values.

\subsubsection*{Example 3}
Let us consider a compartmental system with $m=100$ compartments in the reduced state space. We assume that the transition rate functions are corresponding to Hill kinetics (modified intentionally to have different powers in the numerator and the denominator) and are of the form
\begin{equation}\label{eq:hill}
	\mathcal{K}_{ij}(n_i,c_j-n_j)=k_{ij}\frac{n_i^3(c_j-n_j)^3}{\qty\big(l+n_i^2)\qty\big(l+(c_j-n_j)^2)}
\end{equation}
with $l=350$. We assume that the only nonzero coefficients are
\begin{equation}
	\begin{aligned}
		&k_{i(i+1)}=20\quad k_{i(i+2)}=18\quad k_{i(i+3)}=16\quad k_{i(i+4)}=14\\
		&k_{i(i+5)}=12\quad k_{i(i+6)}=10\quad k_{i(i+7)}=8\quad k_{i(i+8)}=6
	\end{aligned}
\end{equation}
for $i=1,2,\dots,m$, where indices are understood as modulo $m$. Clearly this compartmental graph is strongly connected. Finally, we set capacities
\begin{equation}
	c_1=c_2=\dots=c_{50}=50\quad c_{51}=c_{52}=\dots=c_{100}=100
\end{equation}
Then the Lyapunov function \eqref{eq:lyap} takes the form
\begin{equation}
	\begin{aligned}
		V_{Hill}^{(l,3,2)}(n,\overline n)&=\sum_{i=1}^m\Bigg((\overline n_i-n_i)+3n_i\log\frac{n_i}{\overline n_i}+n_i\log\frac{\overline n_i^2+l}{n_i^2+l}\\
		&+2\sqrt l\qty\bigg(\atan\frac{\overline n_i}{\sqrt l}-\atan\frac{n_i}{\sqrt l})\Bigg).
	\end{aligned}
\end{equation}
We can also factorize as $\theta_i(r)=\hat\theta_i(r)\frac{r^2}{l+r^2}$, when \eqref{eq:lyap} becomes
\begin{equation}
	\begin{aligned}
		V_{Hill}^{(l,2,2)}(n,\overline n)=\sum_{i=1}^m\qty\Bigg(2n_i\log\frac{n_i}{\overline n_i}+n_i\log\frac{\overline n_i^2+l}{n_i^2+l}+2\sqrt l\qty\bigg(\atan\frac{\overline n_i}{\sqrt l}-\atan\frac{n_i}{\sqrt l})).
	\end{aligned}
\end{equation}
Figure \ref{fig:lyap:large} shows the time evolution of Lyapunov functions $V_{Hill}^{(l,3,2)}$, $V_{Hill}^{(l,2,2)}$ and $V^{LTV}$ and their time derivatives.
\begin{figure}[H]
	\centering
	\begin{subfigure}[b]{0.43\textwidth}
		\centering
		\includegraphics[width=\textwidth]{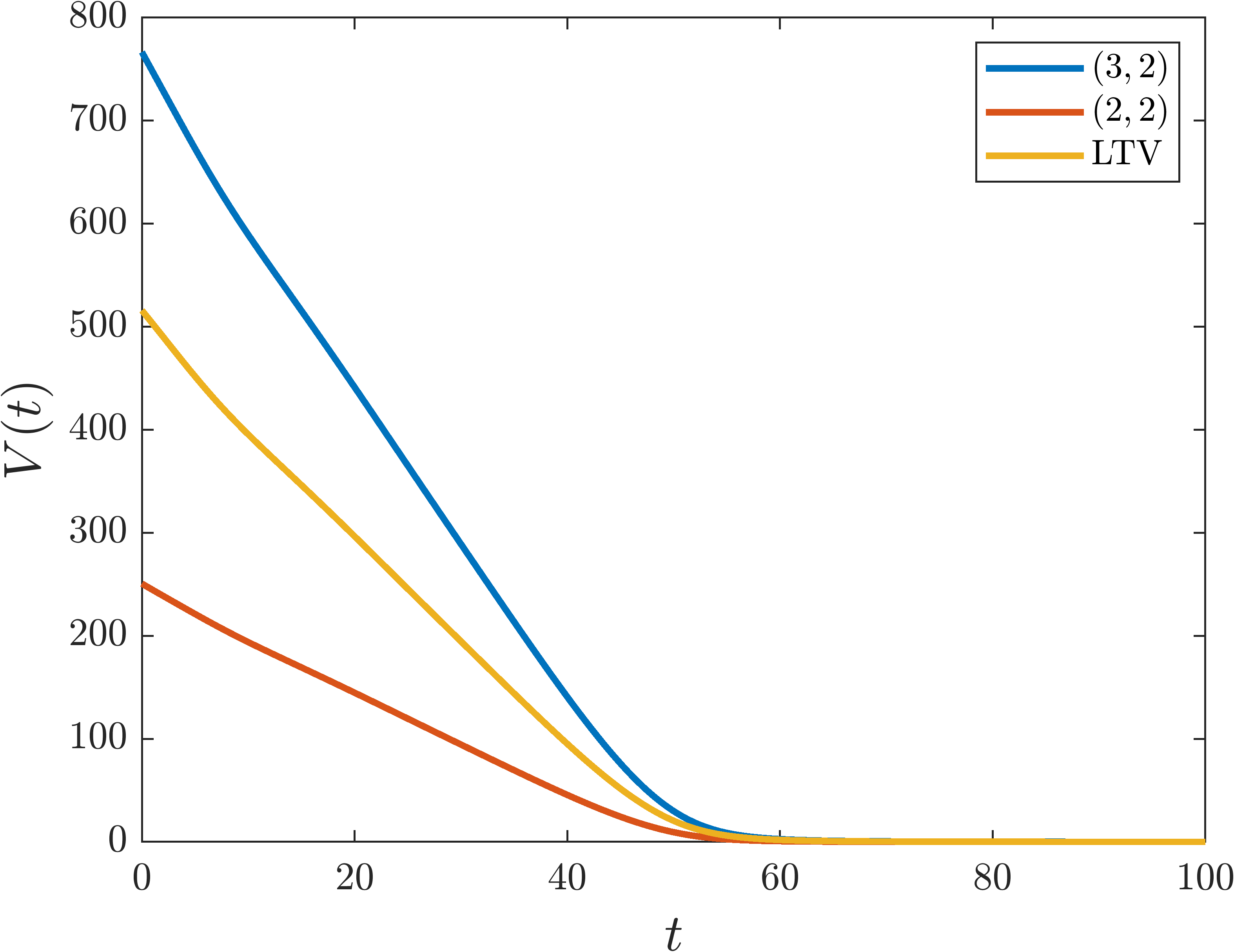}
		\caption{$V(t)$}
	\end{subfigure}
	\begin{subfigure}[b]{0.43\textwidth}
		\centering
		\includegraphics[width=\textwidth]{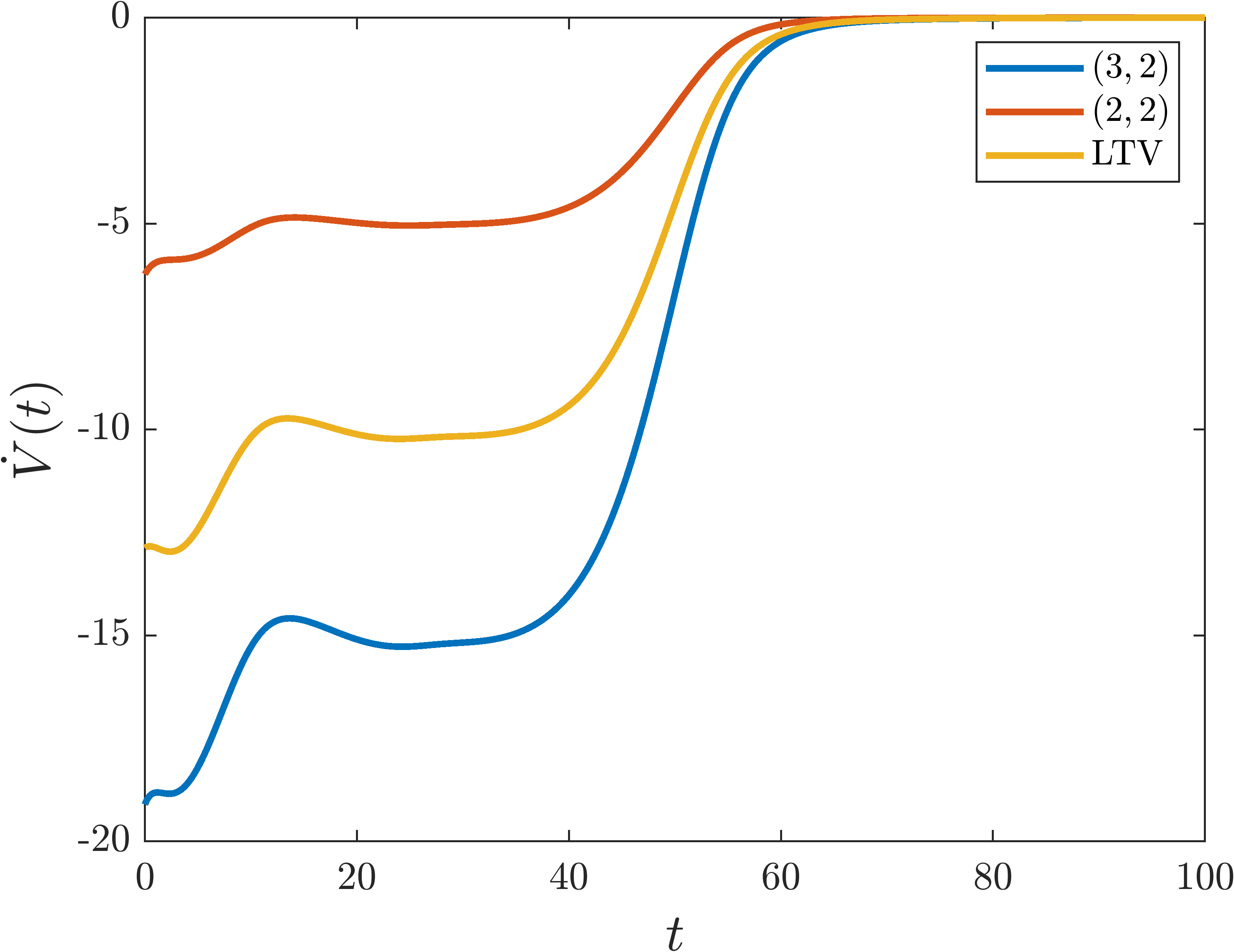}
		\caption{$\dot V(t)$}
	\end{subfigure}
	\caption{Time evolution of Lyapunov functions obtained from various factorizations of the transition rates}\label{fig:lyap:large}
\end{figure}

\begin{rem}
	In the above examples we restricted the factorizations to integer exponents so that we have real analytic transformations. However, the underlying dynamics is not changed through the factorizations and real analyticity is not directly used in the investigation of the ISS-Lyapunov function \eqref{eq:lyap}. Thus, as long as the factored $\hat k_{ij}(t)$ is piecewise locally Lipschitz (which holds after an arbitrarily short time in virtue of Remark \ref{rem:eps}), we can generalize \eqref{eq:lyap_kab} for other values as well; to be precise, we can use any $0<\hat a_i\le a_i$ and $0\le\hat b_i\le\hat a_i$ real numbers.\\

Next, focusing on the Hill kinetics in \eqref{eq:hill}, we note that while the denominator of the transformation $\theta_i(r)=\frac{r^3}{l+r^2}$ in \eqref{eq:hill} cannot be factorized we can rearrange the transformation as
\begin{equation}
	\theta_i(r)=\frac{r^3}{l+r^2}=\underbrace{\frac{r^{3-a_i}(l+r^{b_i})}{l+r^2}}_{\hat\theta_i(r)}\frac{r^{a_i}}{l+r^{b_i}}=\hat\theta_i(r)\frac{r^{a_i}}{l+r^{b_i}}
\end{equation}
where choosing $0<a_i\le3$ and $0\le b_i\le a_i$ ensures that the time-varying coefficient functions are piecewise locally Lipschitz. In this case the exact value of the integral in \eqref{eq:lyap} involves the generalized hypergeometric function and generally cannot be expressed in a closed form. However, in some special cases (such as $b_i=2$ above) we can calculate the integral explicitly; for example setting $a_i=1.5$ and $b_i=0.5$ yields
\begin{equation}
		V_{Hill}^{(l,1.5,0.5)}(n,\overline n)=\sum_{i=1}^m\qty\Bigg((\overline n_i-n_i)+\frac{3}{2}n_i\log\frac{n_i}{\overline n_i}+\qty\big(n_i-l^2)\log\frac{\sqrt{\overline n_i}+l}{\sqrt{n_i}+l}+l\qty\big(\sqrt{\overline n_i}-\sqrt{n_i})).
\end{equation}
\end{rem}

\section{Conclusions}\label{sec:con}
The dynamical properties of compartmental models with general time-varying transition rates were studied in this paper. The analysis is based on the CRN representation of such systems which has a transparent physical meaning by tracking the amounts of available objects and free spaces, respectively, in each compartment. It was shown that the deficiency of the kinetic model form is equal to the number of chordless cycles in the undirected reaction graph of the system. Furthermore, it was proved that time-varying generalized ribosome flows are persistent under mild regularity assumptions on the transition rates, and a wide set of reaction rates satisfying this assumption was characterized, containing well-known examples such as mass-action type rates. It was shown that the studied models can be embedded in at least two ways into the class of rate-controlled biochemical networks originally described in \cite{Chaves2005}. This embedding allows us to prove stability with entropy-like logarithmic Lyapunov functions known from the theory of CRNs. It was illustrated that the non-unique factorization of the rate functions gives rise to a whole family of various possible Lyapunov functions. Finally, periodic model behaviour was also studied, where we showed that trajectories with the same overall initial mass and periodic transition rates having the same period (but possibly different phase) converge to a unique periodic solution.
Besides the theoretical aspects, these improvements may efficiently support structural design or control of compartmental models, which is planned to be addressed in our future work.

\subsection*{Acknowledgements}
The work of M. A. V\'aghy has been supported by the \'UNKP-22-3-I-PPKE-72 National Excellence Program of the Ministry for Innovation and Technology from the source of the National Research, Development and Innovation Fund (NKFIH). The authors acknowledges the support of NKFIH through grants no. 131545.


\bibliographystyle{IEEEtran}
\bibliography{References}

\end{document}